\newcommand{\Z}{\mathbb Z}
\newcommand{\R}{\mathbb R}
\newcommand{\C}{\mathbb C}
\newcommand{\T}{\mathbb T}
\newcommand{\quat}{\mathbb H}
\renewcommand{\P}{\mathbb P}
\newcommand{\mc}{\mathcal}
\newcommand{\lb}{\lbrace}
\newcommand{\rb}{\rbrace}
\newcommand{\la}{\langle}
\newcommand{\ra}{\rangle}
\renewcommand{\phi}{\varphi}
\DeclareMathOperator{\vertex}{vert}
\renewcommand{\leq}{\leqslant}
\renewcommand{\geq}{\geqslant}
\theoremstyle{plain}
\newtheorem{thm}{Theorem}[section]
\newtheorem{lm}[thm]{Lemma}
\newtheorem{cor}[thm]{Corollary}
\newtheorem{pr}[thm]{Proposition}
\theoremstyle{remark}
\newtheorem{rem}[thm]{Remark}
\newtheorem{ex}[thm]{Example}
\theoremstyle{definition}
\newtheorem{defn}[thm]{Definition}
\tikzset{
  every picture/.append style={
    execute at begin picture={\shorthandoff{"}},
    execute at end picture={\shorthandon{"}}
  }
}
\begin{document}
\date{}%9 декабря 2016 г.%\today

\author{Grigory Solomadin}
\thanks{Moscow State University.
E-mail: \texttt{grigory.solomadin@gmail.com}.
The author is a Young Russian Mathematics award winner and would like to thank
its sponsors and jury.}
\title[Bordism of $H_{1,n}$ and $\C P^1\times \C P^{n-1}$]{The explicit geometric constructions of bordism of Milnor hypersurface $H_{1,n}$ and $\C P^1\times \C P^{n-1}$}%Torus actions in a construction in bordism theory
%Dualization of complex line bundles and q

\begin{abstract}
In the present paper we construct two new explicit complex bordisms between any two projective bundles over $\C P^1$ of the same complex dimension, including the Milnor hypersurface $H_{1,n}$ and $\C P^1\times \C P^{n-1}$. These constructions reduce the bordism problem to the null-bordism of some projective bundle over $\C P^1$ with the non-standard stably complex structure.
\end{abstract}

\maketitle
%Stably complex manifolds sharing the last property are known as TNS-manifolds.

\section{Introduction}

Let $P^{n}(a)$ denote the fiberwise projectivisation $\P(\eta_{1}^{a}\oplus\underline{\C}^{n-1})$ over $\C P^1$, where $\underline{\C}^{n-1}$ denotes the trivial complex linear bundle or rank $n-1$, $\eta_1\to\C P^1$ is the tautological bundle over $\C P^1$ and $\eta_{1}^{a}$ denotes the tensor product of $|a|$ copies of $\overline{\eta_{1}}$ for $a<0$ (the bar means complex conjugation) and of $\eta_{1}$ for $a\geq 0$, resp. $P^n(a)$ is a complex $n$-dimensional manifold. In the present paper we give three different proofs of the following well-known Theorem (see \cite[Theorem 3.1 b]{bu-mi-no-71}, \cite[Excercise D.6.12, p.471]{bu-pa-15}).%solutions of this problem.

\begin{thm}\label{thm:main}
The manifolds $P^n(a)$ corresponding to different $a\in\Z$ are complex bordant to each other.
\end{thm}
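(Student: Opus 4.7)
The plan is to reduce the theorem, by transitivity of complex bordism, to producing for each $a\in\Z$ an explicit complex bordism between $P^n(a)$ and $P^n(a+1)$. I would attempt to obtain such a bordism by realising both manifolds as fibres of a single degenerating holomorphic family parameterised by $\C P^1$, so that a suitable resolution of the degeneration yields the desired $(2n+1)$-real-dimensional cobordism carrying its inherited stably complex structure.

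The basic ingredient is the complex surface $B=\C P^1\times\C P^1$ equipped with the line bundle $L=\pi_1^*\eta_1\otimes\pi_2^*\eta_1^a=\mathcal{O}(1,a)$, together with the associated projective bundle $Y=\P(L\oplus\underline{\C}^{n-1})\to B$, which is a complex $(n+1)$-manifold. A direct calculation of the degrees of $L$ on the various rational curves of $B$ shows that $Y$ over a horizontal ruling $\C P^1\times\{q\}$ is $P^n(1)$, over a vertical ruling $\{p\}\times\C P^1$ is $P^n(a)$, and over a smooth bidegree-$(1,1)$ curve (for instance the diagonal) is $P^n(a+1)$. I would then take a generic pencil of $(1,1)$-curves on $B$ one of whose members is the reducible curve $\{p\}\times\C P^1\cup\C P^1\times\{q\}$, let $\mathcal{C}\subset B\times\C P^1$ be the total space of this pencil, and set $W=Y\times_B\mathcal{C}$. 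The pencil parameter gives a holomorphic $f\colon W\to\C P^1$ whose generic fibre is $P^n(a+1)$ and whose special fibre is the singular union $P^n(a)\cup_{\C P^{n-1}}P^n(1)$ glued along the fibre of $Y$ over the node.

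The main obstacle is extracting an honest bordism between smooth manifolds from this degenerating family, since the preimage of an arc through the critical value of $f$ is not immediately a smooth manifold with the expected boundary. The remedy I would pursue is a small resolution of the special fibre, or equivalently a deformation-to-normal-cone modification of $W$ along the exceptional $\C P^{n-1}$, which replaces the singular union by a smooth projective bundle over $\C P^1$ carrying a \emph{non-standard} stably complex structure arising from the resolution data. As signalled in the abstract, the theorem is thereby reduced to exhibiting a complex null-bordism of this auxiliary twisted projective bundle; this is the genuinely new input required, and I would attempt to produce it by a second explicit geometric construction --- for example, a projective bundle over a suitable complex surface with boundary, arranged so that its stably complex structure restricts to the non-standard one on the boundary. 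Failing a clean geometric null-bordism, the same reduction could in principle be closed by a Chern-number computation and Milnor's criterion, but the appeal of the approach outlined in the abstract is precisely that the null-bordism is given in an explicit, geometric form.
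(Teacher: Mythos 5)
Your degeneration set-up is sound as geometry: with $L=\mathcal{O}(1,a)$ on $B=\C P^1\times\C P^1$ and $Y=\P(L\oplus\underline{\C}^{n-1})$, the restrictions of $Y$ to the two rulings and to a smooth $(1,1)$-curve are indeed $P^n(a)$, $P^n(1)$ and $P^n(a+1)$ (up to the usual sign conventions), and pulling back along a pencil through a reducible member gives a family whose special fibre is $P^n(a)\cup_{\C P^{n-1}}P^n(1)$. The gap is the step where you extract a bordism from this family. The total space $W=Y\times_B\mathcal{C}$ is already smooth, so there is no ``small resolution of the special fibre'' to perform; blowing up along the exceptional $D\cong\C P^{n-1}$ does not smooth the special fibre but inserts an extra component into it, and the deformation-to-the-normal-cone argument yields exactly the double point relation $[P^n(a+1)]=[P^n(a)]+[P^n(1)]-[\P_D(\underline{\C}\oplus N_{D/A})]$, where $D$ is a fibre of $P^n(a)\to\C P^1$, so $N_{D/A}$ is trivial and the correction term is $\C P^1\times\C P^{n-1}=P^n(0)$ with its \emph{standard} stably complex structure. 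No projective bundle with a non-standard structure appears, and you give no construction that would produce one; the claimed reduction to a null-bordism of a twisted bundle is therefore unsubstantiated.

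Even granting the rigorous form of your relation, it only says $[P^n(a+1)]-[P^n(a)]=[P^n(1)]-[P^n(0)]$ for all $a$, i.e.\ the difference is independent of $a$; it does not show the common difference $[H_{1,n}]-[\C P^1\times\C P^{n-1}]$ vanishes, and that is precisely the central case of the theorem (the Corollary). Your fallback (Chern numbers plus Milnor's criterion) was aimed at the auxiliary twisted bundle, which your construction never produces, so it cannot close this. For comparison, the paper proves the three-term identity $[P^{n}(a)]-[P^{n}(b)]+[P^{n}(b-a),c_{\mc{T}}(b-a)]=0$, in which the non-standard structure $c_{\mc{T}}$ arises from explicit clutching functions: either by gluing three copies of $D^2\times\C P^{n-1}$ pairwise with prescribed clutching data (Lemma \ref{lm:glue}, Theorem \ref{thm:gluenecc}), or via Sarkar's construction over $H^2\times\Delta^{n-1}$ (Theorem \ref{thm:orbb}); the twisted term is then killed by the Chern-number computation and Milnor--Novikov (Proposition \ref{pr:hijnum}). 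Note that smoothly your vanishing-cycle picture is exactly the decomposition $P^n(c)=D^2\times\C P^{n-1}\cup_{S^1\times\C P^{n-1}}D^2\times\C P^{n-1}$ the paper exploits, but with only two pieces; it is the third glued piece and the bookkeeping of clutching functions that make the twisted, null-bordant term appear. To salvage your route you would need either an independent proof that $[H_{1,n}]=[\C P^1\times\C P^{n-1}]$ or a modification of the degeneration argument that genuinely produces the $c_{\mc{T}}$-twisted bundle as the defect.
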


Any $\C P^{n-1}$-bundle over $\C P^1$ is homeomorphic to $P^{n}(a)$ for some $a\in \Z$. Manifolds of such kind include the Milnor hypersurface $P^n(1)=H_{1,n}$ (i.e. a generic smooth hypersurface of bidegree $(1,1)$ in $\C P^1\times \C P^{n}$) and the trivial projective bundle $P^n(0)=\C P^1\times\C P^{n-1}$.%Observe that  and  is the Milnor hypersurface in $\C P^{1}\times\C P^{n}$.

\begin{cor}
$H_{1,n}$ is complex bordant to $\C P^{n-1}\times\C P^1$.
\end{cor}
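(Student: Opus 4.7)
The plan is to deduce the corollary directly from Theorem~\ref{thm:main} by specialising the parameter $a$ to the two values that give the manifolds of interest, so all the substantive work is pushed into the theorem itself.

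First I would spell out the identification $P^n(0)=\C P^1\times\C P^{n-1}$: by definition $P^n(0)=\P(\eta_1^0\oplus\underline{\C}^{n-1})=\P(\underline{\C}^n)$, and the projectivisation of a trivial rank-$n$ bundle over any base $B$ is $B\times\C P^{n-1}$ as a complex manifold. With $B=\C P^1$ this is exactly $\C P^1\times \C P^{n-1}$, and the stably complex structure induced from the projective bundle construction agrees with the standard product one because the vertical tangent bundle $T^{\mathrm{v}}$ of a trivial projective bundle is pulled back from the $\C P^{n-1}$ factor.

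Second, I would identify $P^n(1)$ with $H_{1,n}$. Realise $H_{1,n}\subset\C P^1\times\C P^n$ as the zero locus of the section $z_0w_0+z_1w_1$ of the line bundle $\eta_1^{*}\boxtimes \mathcal{O}_{\C P^n}(1)$, and project onto the first factor. The fibre over $[z_0\!:\!z_1]\in\C P^1$ is the projective hyperplane $\{[w]\in\C P^n : z_0w_0+z_1w_1=0\}$, i.e. the projectivisation of the kernel of the bundle map $\underline{\C}^{n+1}\to \eta_1^{*}$, $(w_0,\dots,w_n)\mapsto z_0w_0+z_1w_1$. That kernel is isomorphic to $\eta_1\oplus\underline{\C}^{n-1}$ (the first summand comes from splitting the Euler-type extension $0\to\eta_1\to\underline{\C}^2\to\eta_1^{*}\to 0$, the second from the free $w_2,\dots,w_n$ coordinates), so $H_{1,n}\cong\P(\eta_1\oplus\underline{\C}^{n-1})=P^n(1)$ as complex manifolds.

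With both identifications in hand, Theorem~\ref{thm:main} applied to $a=0$ and $a=1$ yields a complex bordism between $P^n(0)=\C P^1\times\C P^{n-1}$ and $P^n(1)=H_{1,n}$, which is the claim. The only non-trivial step is checking the compatibility of the stably complex structures in the identification $H_{1,n}\cong P^n(1)$; this is the main obstacle, but it is a routine tangent bundle comparison using the short exact sequence $0\to T^{\mathrm{v}}P^n(a)\to TP^n(a)\to \pi^{*}T\C P^1\to 0$ and the normal bundle sequence for $H_{1,n}\hookrightarrow \C P^1\times\C P^n$, so the corollary reduces entirely to invoking the theorem.
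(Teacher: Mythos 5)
Your proposal is correct and is essentially the paper's own argument: the corollary follows at once from Theorem \ref{thm:main} via the identifications $P^n(0)=\C P^1\times\C P^{n-1}$ and $P^n(1)=H_{1,n}$, which the paper simply asserts in the introduction. The details you add on realising $H_{1,n}$ as $\P(\eta_1\oplus\underline{\C}^{n-1})$ just fill in those standard identifications and do not change the route.
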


The first proof of Theorem \ref{thm:main} is an algebraic folklore result. Namely, one has to compute Chern numbers of $P^n(a)$ and notice that they do not depend on $a$ (see Proposition \ref{pr:hijnum}). On the other hand, Theorem \ref{thm:main} states that any two such complex manifolds are connected components of the boundary of some stably complex manifold (modulo its null-bordant boundary components). A question remaining open was to give an explicit geometric construction of a stably complex manifold with boundary components $P^n(a)$, $P^n(b)$ and null-bordant manifolds for any $a,b\in\Z$. This problem was discussed by M.~Kreck, P.~Landweber, T.~Panov and N.~Strickland since the beginning of $2000$ (private communication with T.~Panov, 2018).

The remaining two proofs of Theorem \ref{thm:main} are geometric, by providing an explicit bordism between the manifolds in question. These are new results in the field giving the solution to the aforementioned problem. Second proof of Theorem \ref{thm:main} is based on the refinement of the gluing lemma from \cite{to-00}. The original ``probably well known'' method from \cite{to-00} consisted of gluing three stably complex manifolds $A,B,C$ of dimension $2n$ with diffeomorphic boundaries to obtain the stably complex\\
$(2n+1)$-dimensional manifold with the respective boundary components $A\cup B$, $B\cup C$, $C\cup A$ with the natural stably complex structures. Our refinement of this method consists of taking the clutching functions into account. This results in obtaining different stably complex structures (depending on the clutching functions) on the glued manifold for the same stably complex manifolds $A,B,C$.

We apply the refined gluing lemma to three copies of $D^2\times\C P^{n-1}$ (where\\
$D^2\subset\R^{2}$ is a $2$-ball) with prescribed clutching functions on the boundary. For any two complex projective bundles $P^n(a)$, $P^{n}(b)$ over $\C P^1$ of complex dimension $n$ we indicate the clutching functions. The above construction for this data then yields the manifold $W^{2n+1}$ being a complex bordism (modulo the null-bordant boundary component) between $P^n(a)$, $P^{n}(b)$ due to Stong's classical ``abstract nonsense'' definition of bordism from \cite[p.5]{st-68} (see Theorem \ref{thm:gluenecc}).

The third proof of Theorem \ref{thm:main} is based on toric topology methods. We use Sarkar's construction \cite{sou-15} of a stably complex orbifold with quasitoric orbifold boundary to construct the necessary manifold $W^{2n+1}$. We choose the simple (combinatorial) polytope $Q$ to be the cartesian product $H^2\times\Delta^{n-1}$ of the hexagon and the symplex with three distinguished facets $F_2,F_4,F_6$. We show that for any two complex projective bundles over $\C P^1$ there exists an isotropy function $\lambda$ on the complement $\lb Q\setminus F_2,F_4,F_6\rb$ s.t. the corresponding orbifold is a stably complex manifold with boundary yielding the bordism between them (modulo the null-bordant boundary component), see Theorem \ref{thm:orbb}.

It is pleasure to acknowledge several enjoyable and influential discussions with  A.Ayzenberg, V.M.~Buchstaber, T.~Panov and S.~Sarkar. The author is a Young Russian Mathematics award winner and would like to thank its sponsors and jury.

%This was  An easy excercise requiring elementary knowledge of unitary cobordism as well as the cohomology ring of projective bundles and computation of Chern numbers, shows that the complex cobordism class of any projective bundle with fiber $\C P^{n-1}$ over $\C P^1$ depends only on $n$However, the  In particular, any two such complex manifolds are connected components of the boundary of some stably complex manifold $W^{2n+1}$.Although these are The mathematical tools involved in these solutions are not new. This method is described by B.Totaro as , although it is the only reference we've managed to find. gluing technique and write down the respective boundary components (using classical theory of clutching functions of vector bundles over $\C P^1\simeq S^2$). Choosing the diffeomorphisms of the boundaries of $D^2\times\C P^{n}$ appropriately as boundary components Another way to construct the bordism between two projective bundles over $\C P^1$ requires to notice that they are toric varieties.We remark that this example of two bordant manifolds modulo null-bordism one satisfies.  has these two  appropriately one obtains  and a bounding stably complex manifold as the respective boundary components of an $\lb Q_1,\dots,Q_k\rb$ and the
\section{Topology of $\C P^k$-bundles over $\C P^1$}

In this Section we give a brief overview of some well-known topological properties of projective fibre bundles. We will use these properties in the next two Sections. We refer to  \cite{bu-pa-15} as a source on topology of toric and quasitoric manifolds.

\begin{pr}\label{pr:natp}
The natural complex structure on $\C P^n$ satisfies
\[
T\C P^n\oplus\underline{\C}\simeq (n+1)\overline{\eta_{n}},
\]
where $(n+1)\overline{\eta_{n}}$ denotes the Whitney sum of $n+1$ copies of the vector bundle $\overline{\eta_{n}}$.
%$\underline{\C}$ denotes the trivial linear bundle over $\C P^n$, $\overline{\eta}_{n}$ denotes the complex conjugate vector bundle to the tautological line bundle $\eta_{n}\to\C P^n$, and
\end{pr}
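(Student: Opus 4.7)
The plan is to realize $T\C P^n$ as a quotient of a trivial bundle by the tautological line bundle, via the classical Euler sequence, and then split this sequence using a Hermitian metric.

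First I would fix notation: view $\C P^n$ as the space of complex lines $\ell\subset\C^{n+1}$, so $\eta_n$ is the subbundle of the trivial bundle $\underline{\C}^{n+1}$ whose fiber over $\ell$ is $\ell$ itself. A standard differential-geometric identification, coming from writing a smooth curve of lines $\ell(t)\subset\C^{n+1}$ as the graph of a linear map $\ell(0)\to\ell(0)^{\perp}$, gives a canonical isomorphism
\[
T\C P^n \;\simeq\; \mathrm{Hom}_{\C}(\eta_n,\eta_n^{\perp}),
\]
where $\eta_n^{\perp}$ is the orthogonal complement of $\eta_n$ inside $\underline{\C}^{n+1}$ with respect to the standard Hermitian metric. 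I would sketch this identification explicitly in affine coordinates on $\C P^n$ to make the isomorphism canonical.

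Next I would use the split short exact sequence of complex vector bundles
\[
0\longrightarrow \eta_n\longrightarrow\underline{\C}^{n+1}\longrightarrow\eta_n^{\perp}\longrightarrow 0,
\]
(splittable by the Hermitian metric), and apply $\mathrm{Hom}_{\C}(\eta_n,-)$ to obtain
\[
0\longrightarrow \mathrm{Hom}_{\C}(\eta_n,\eta_n)\longrightarrow \mathrm{Hom}_{\C}(\eta_n,\underline{\C}^{n+1})\longrightarrow \mathrm{Hom}_{\C}(\eta_n,\eta_n^{\perp})\longrightarrow 0.
\]
The first term is canonically trivial ($\mathrm{Hom}(\eta_n,\eta_n)\simeq\underline{\C}$ via the identity endomorphism), the middle term is $(n+1)\eta_n^{*}$, and the last term is $T\C P^n$ by the previous step.

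Finally I would observe that for any complex line bundle $\xi$ the dual $\xi^{*}$ is isomorphic to the conjugate bundle $\overline{\xi}$ (this is standard: a Hermitian form on the fibers furnishes a conjugate-linear isomorphism $\xi\to\xi^{*}$, whose domain reinterpreted as $\overline{\xi}$ becomes a complex-linear isomorphism $\overline{\xi}\simeq\xi^{*}$). Hence $(n+1)\eta_n^{*}\simeq(n+1)\overline{\eta_n}$, and the split sequence above yields
\[
T\C P^n\oplus\underline{\C}\;\simeq\;(n+1)\overline{\eta_n},
\]
as claimed. The only mildly subtle point is ensuring the identification $T\C P^n\simeq\mathrm{Hom}(\eta_n,\eta_n^{\perp})$ respects the natural complex structures on both sides; I would make this explicit by computing in an affine chart, where both sides reduce to $\C^n$ in a canonical way.
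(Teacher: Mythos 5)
Your argument is correct: it is the standard Euler-sequence proof, with the identification $T\C P^n\simeq\mathrm{Hom}_{\C}(\eta_n,\eta_n^{\perp})$, the metric splitting of $0\to\eta_n\to\underline{\C}^{n+1}\to\eta_n^{\perp}\to 0$, the canonical trivialization of $\mathrm{Hom}(\eta_n,\eta_n)$, and the conjugate-linear pairing giving $\eta_n^{*}\simeq\overline{\eta_n}$ all handled correctly. The paper states this proposition without proof, treating it as a classical fact, so there is no argument of the paper to compare against; what you wrote is precisely the standard textbook derivation that the statement implicitly relies on, and the one mildly subtle point you flag (that the chart computation makes $T\C P^n\simeq\mathrm{Hom}_{\C}(\eta_n,\eta_n^{\perp})$ an isomorphism of complex, not just real, bundles) is indeed the only place where care is needed.
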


\begin{pr}
For any complex vector bundle $\xi\to \C P^1$ of rank $n$ the complex fiberwise projectivisation $\P(\xi)$ is homeomorphic to $P^{n}(a)$ for some $a\in\Z$.
% $\P(\overline{\eta}_{1}^{a}\oplus\underline{\C}^k)\to \C P^1$ for some $a\in\Z$, where $\overline{\eta}_{1}^{a}$ denotes the $|a|$-th tensor product of $\eta_{1}$ for $a<0$ or of $\overline{\eta}_{1}$ for $a\geq 0$, resp.
\end{pr}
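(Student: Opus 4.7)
The plan is to reduce the classification of complex rank-$n$ vector bundles over $\C P^1$ to the computation of $\pi_{1}(U(n))$, and then observe that projectivisation intertwines the resulting splitting with the model $P^{n}(a)$.

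First, I would recall the clutching construction: any complex rank-$n$ vector bundle $\xi\to\C P^1=S^2$ is obtained by gluing the two trivial bundles over the upper and lower hemispheres via a transition function $f\colon S^1\to U(n)$ on the equator, and isomorphism classes of such bundles are in bijection with $\pi_{1}(U(n))$. The standard fibration $U(n-1)\to U(n)\xrightarrow{\det} U(1)$ together with the fact that $U(n-1)$ is connected and simply connected for $n\geq 2$ (and the trivial case $n=1$) shows that $\det_{*}\colon\pi_{1}(U(n))\xrightarrow{\cong}\pi_{1}(U(1))=\Z$ is an isomorphism. Thus, under the clutching correspondence, the bundle $\xi$ is determined up to isomorphism by the integer $\deg(\det\circ f)$; this integer equals $c_{1}(\xi)\in H^{2}(\C P^{1};\Z)=\Z$.

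Next I would exhibit an explicit model realising each integer. The tautological bundle $\eta_{1}\to\C P^1$ has $c_{1}(\eta_{1})=-1$ (equivalently $c_{1}(\overline{\eta_{1}})=1$), so $\eta_{1}^{a}$ realises $c_{1}=a$ (or $-a$ depending on convention), and consequently $\eta_{1}^{a}\oplus\underline{\C}^{n-1}$ realises $c_{1}=a$ for rank $n$. By the previous paragraph, every rank-$n$ bundle over $\C P^1$ is therefore isomorphic to $\eta_{1}^{a}\oplus\underline{\C}^{n-1}$ for $a=\pm c_{1}(\xi)$.

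Finally, fibrewise projectivisation is a functor of vector bundles up to isomorphism, so from $\xi\simeq\eta_{1}^{a}\oplus\underline{\C}^{n-1}$ one obtains
\[
\P(\xi)\;\cong\;\P(\eta_{1}^{a}\oplus\underline{\C}^{n-1})\;=\;P^{n}(a)
\]
as $\C P^{n-1}$-bundles over $\C P^1$, hence in particular as topological spaces.

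The main obstacle, really the only substantive point, is the identification $\pi_{1}(U(n))\cong\Z$ via the determinant; everything else is formal from the clutching description of bundles over $S^2$ and the naturality of $\P(-)$. Since this homotopy computation is standard, I would invoke it rather than reprove it.
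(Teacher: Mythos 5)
The paper states this proposition without proof, as standard background, so there is no argument in the text to compare against; your route --- classify rank-$n$ complex vector bundles over $\C P^1\simeq S^2$ by clutching functions, identify the classifying set with $\pi_1(U(n))\cong\Z$ detected by $c_1$, conclude $\xi\simeq\eta_1^{a}\oplus\underline{\C}^{n-1}$, and finish by functoriality of fibrewise projectivisation --- is exactly the standard argument, and it does prove the statement.

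One correction, though: your justification of the key homotopy fact is garbled. The determinant $\det\colon U(n)\to U(1)$ is a fibration with fibre $SU(n)$, not $U(n-1)$, and $U(n-1)$ is certainly not simply connected: $\pi_1(U(k))\cong\Z$ for every $k\geq 1$. The correct statement is that $SU(n)$ is simply connected (by induction via the fibration $SU(n-1)\to SU(n)\to S^{2n-1}$), whence $\det_*\colon\pi_1(U(n))\to\pi_1(U(1))$ is an isomorphism; alternatively, use $U(n-1)\to U(n)\to S^{2n-1}$ to see that the inclusion $U(1)\hookrightarrow U(n)$ induces an isomorphism on $\pi_1$, with $\det$ providing the retraction. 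Since you explicitly invoke this computation as standard rather than relying on your sketch, the slip does not sink the proof, but the sentence as written is false and should be replaced. A minor point of bookkeeping: with the paper's convention $c_1(\eta_1^{a})=-a$, so $\eta_1^{a}\oplus\underline{\C}^{n-1}$ realises first Chern class $-a$; as you note, the sign is immaterial because $a$ runs over all of $\Z$.
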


\begin{pr}\label{pr:projbndlstr}
%Consider the vector bundle $\overline{\eta}_{1}^{a}\oplus\underline{\C}^k$ over $\C P^1$. Let $p:\ \P(\overline{\eta}_{1}^{a}\oplus\underline{\C}^k)\to \C P^1$ be the fiberwise projectivisation of the bundle $\overline{\eta}_{1}^{a}\oplus\underline{\C}^k$. Then t\P(\overline{\eta}_{1}^{a}\oplus\underline{\C}^k) \P(\overline{\eta}_{1}^{a}\oplus\underline{\C}^k) \P(\overline{\eta}_{1}^{a}\oplus\underline{\C}^k) \P(\overline{\eta}_{1}^{a}\oplus\underline{\C}^k)\to\C P^1
The natural complex structure on the projectivisation $P^{n}(a)$ satisfies
\begin{equation}\label{eq:stand}
TP^{n}(a)\oplus\underline{\C}^2\simeq \overline{\zeta}\otimes (p^*\eta^a_{1}\oplus \underline{\C}^{n-1})\oplus 2p^*\overline{\eta_{1}},
\end{equation}
where $\zeta$ denotes the tautological line bundle on the projectivisation $P^{n}(a)$.
\end{pr}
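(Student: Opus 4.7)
The plan is to decompose $TP^n(a)$ into its vertical and horizontal parts with respect to the projection $p\colon P^n(a)\to\C P^1$, compute each summand separately, and then add them.

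First I would use the fact that for a smooth fibre bundle the tangent bundle splits (in a non-canonical but well-defined isomorphism class) as
\[
TP^n(a)\;\simeq\;T_{\mathrm{vert}}P^n(a)\oplus p^*T\C P^1,
\]
where $T_{\mathrm{vert}}P^n(a)=\ker dp$ is the bundle of tangent vectors along the fibres $\C P^{n-1}$. The point is to identify each of these summands after adding trivial line bundles, so that two trivial $\underline{\C}$-summands are absorbed (one for each factor).

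For the fibrewise summand I would invoke the relative Euler short exact sequence associated to the projectivisation $\P(\xi)\to B$ of a rank-$n$ complex vector bundle $\xi$: namely, with $\zeta\hookrightarrow p^*\xi$ the tautological sub-bundle,
\[
0\longrightarrow \underline{\C}\longrightarrow \overline{\zeta}\otimes p^*\xi\longrightarrow T_{\mathrm{vert}}\P(\xi)\longrightarrow 0.
\]
Specialising to $B=\C P^1$ and $\xi=\eta_1^a\oplus\underline{\C}^{n-1}$ and using that every short exact sequence of complex vector bundles splits smoothly, this yields
\[
T_{\mathrm{vert}}P^n(a)\oplus\underline{\C}\;\simeq\;\overline{\zeta}\otimes\bigl(p^*\eta_1^a\oplus\underline{\C}^{n-1}\bigr).
\]

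For the base summand, I would apply Proposition \ref{pr:natp} in the case $n=1$, which gives $T\C P^1\oplus\underline{\C}\simeq 2\overline{\eta_1}$, and then pull back along $p$ to obtain
\[
p^*T\C P^1\oplus\underline{\C}\;\simeq\;2\,p^*\overline{\eta_1}.
\]
Adding the two identities and using the splitting of $TP^n(a)$ gives exactly \eqref{eq:stand}. There is no real obstacle here; the only mildly delicate point is verifying that the splittings (of the vertical/horizontal decomposition and of the Euler sequence) can indeed be chosen compatibly with the complex structure so that the isomorphism \eqref{eq:stand} holds as complex vector bundles, which follows because all the relevant extensions are of complex vector bundles over paracompact bases.
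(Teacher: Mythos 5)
Your argument is correct: the relative Euler sequence $0\to\underline{\C}\to\overline{\zeta}\otimes p^*\xi\to T_{\mathrm{vert}}\P(\xi)\to 0$ together with the smooth splitting of the vertical/horizontal sequence and $T\C P^1\oplus\underline{\C}\simeq 2\overline{\eta_1}$ gives exactly \eqref{eq:stand}. The paper states this proposition without proof as a standard fact about projectivisations, and your derivation is precisely the standard argument it implicitly relies on, so nothing further is needed.
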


In the following we will consider non-standard complex structures on $\C P^{1}$ and $P^{n}(a)$.

\begin{defn}
Let $c^{0}_{\mc{T}}:\ T \C P^{1}\oplus\underline{\R}^2\simeq_{\R} \underline{\C}^{2}$ and
\begin{equation}\label{eq:stcplxh}
c_{\mc{T}}(a):\ TP^{n}(a)\oplus\underline{\C}^2\simeq_{\R} \overline{\zeta}\otimes (p^*\eta^a_{1}\oplus \underline{\C}^{n-1})\oplus \underline{\C}^{2}.
\end{equation}
be the isomorphisms of realifications of vector bundles above.
\end{defn}
%%\eta_1\oplus\overline{\eta_{1}}\simeq

The stably complex manifold $(\C P^1,c^{0}_{\mc{T}})$ is easily seen to be null-bordant. Further we will check that $(P^{n}(a),c_{\mc{T}}(a))$ also bounds (see Proposition \ref{pr:hijnum} $(ii)$).

\begin{rem}
Consider the quaternionic space $\quat$ with coordinates $c_1,c_2\in\C$. The unit sphere $S^3\subset \quat$ is given by the condition $|c_1+c_2j|^2=1$. Left and right actions of $S^1$ on $\quat$ are given by the formulas
\[
L(e^{2\pi i t})(c_1+c_2j)=e^{2\pi i t}(c_1+c_2j)=e^{2\pi i t} c_1+e^{2\pi i t}c_2j,
\]
\[
R(e^{2\pi i t})(c_1+c_2j)=(c_1+c_2j)e^{2\pi i t}=e^{2\pi i t} c_1+e^{2\pi i t}\overline{c}_2j.
\]
$S^3\subset \quat$ is invariant under both of these actions. The quotients of $S^3$ by the left and right actions of $S^1$ are $\C P^1$ and $(\C P^1,c^{0}_{\mc{T}})$, resp.
\end{rem}

By applying the Leray-Hirsch theorem (see \cite[\S 20]{bo-tu-82}) to the particular projective fibre bundle $P^{n}(a)$ one immediately obtains
\begin{pr}\label{pr:coh}
One has an isomorphism
\[
H^*(P^{n}(a);\Z)\simeq\Z[x,y]/(x^2,y^{k+1}-axy^k),
\]
where $x=c_1(p^*\overline{\eta_1}), y=c_1(\overline{\zeta})$ in the denotations of Proposition \ref{pr:projbndlstr}.
\end{pr}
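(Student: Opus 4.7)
The plan is to apply Leray-Hirsch to the $\C P^{n-1}$-bundle $p:P^n(a)\to\C P^1$ to recover the additive $H^*(\C P^1;\Z)$-module structure, and then to identify the multiplicative relation via the classical Grothendieck projective-bundle formula.

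First I would verify the Leray-Hirsch hypothesis: the restriction of $y=c_1(\overline{\zeta})$ to a fiber $\C P^{n-1}\subset P^n(a)$ is the first Chern class of the dual of the tautological line bundle on $\C P^{n-1}$, i.e.\ the standard generator of $H^*(\C P^{n-1};\Z)$. Thus $1,y,\ldots,y^{n-1}$ restrict fiberwise to an additive basis, and Leray-Hirsch produces an $H^*(\C P^1;\Z)$-module isomorphism
\[
H^*(P^n(a);\Z)\simeq H^*(\C P^1;\Z)\otimes_{\Z}\langle 1,y,\ldots,y^{n-1}\rangle.
\]
Since $H^*(\C P^1;\Z)\simeq \Z[x]/(x^2)$ with $x=c_1(p^*\overline{\eta_1})$ pulled back from the base, the relation $x^2=0$ is immediate and the additive side of the claim is done.

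It remains to express $y^n$ in this Leray-Hirsch basis. For this I would invoke the Grothendieck relation for $E=\eta_1^a\oplus\underline{\C}^{n-1}$: the tautological sequence $0\to\zeta\to p^*E\to Q\to 0$ together with $\rk Q=n-1$ forces $c_n(Q)=0$, which upon expanding $c(Q)=c(p^*E)/c(\zeta)$ becomes
\[
\sum_{i=0}^{n} c_i(p^*E)\,y^{n-i}=0,\qquad y=c_1(\overline{\zeta}).
\]
The final step is a quick Chern-class computation on $\C P^1$: since $\underline{\C}^{n-1}$ is trivial and $c_1(\eta_1^a)=ac_1(\eta_1)=-ax$, one has $c(p^*E)=1-ax$ and higher classes vanish for dimensional reasons ($H^{\geq 4}(\C P^1;\Z)=0$). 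Substituting collapses the Grothendieck relation to $y^n-axy^{n-1}=0$, matching the stated relation (with $k=n-1$).

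The argument is essentially routine once Leray-Hirsch and the Grothendieck formula are granted. The only place where an error could hide is the sign bookkeeping introduced by $\overline{\eta_1}$ and $\overline{\zeta}$ versus $\eta_1$ and $\zeta$; this is what decides whether the coefficient of $xy^{n-1}$ in the final relation comes out as $+a$ or $-a$, and I expect tracking these signs to be the only piece worth verifying carefully rather than a genuine obstacle.
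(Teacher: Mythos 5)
Your proposal is correct and follows essentially the same route as the paper, which simply invokes the Leray--Hirsch theorem for the bundle $p:P^n(a)\to\C P^1$ and leaves the details implicit; your filling-in of the fiberwise restriction of $y=c_1(\overline{\zeta})$ and of the Grothendieck relation $\sum_i c_i(p^*E)y^{n-i}=0$ is exactly the standard completion of that argument. Your sign bookkeeping is also right: with $x=c_1(p^*\overline{\eta_1})$ one has $c(p^*(\eta_1^a\oplus\underline{\C}^{n-1}))=1-ax$ (for either sign of $a$ under the paper's convention), giving $y^{n}-axy^{n-1}=0$, which is consistent with the paper's Chern number computation in Proposition \ref{pr:hijnum}.
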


We give a simple calculation of Chern numbers $c_{I}$ of projective fibre bundles $P^{n}(a)$, $a\in\Z$, below.
\begin{pr}\label{pr:hijnum}
Let $I=\lb i_1,\dots, i_r\rb$ be a partition of $n=\sum_{l=1}^{r} i_l$, and let $a,b\in\Z$.

$(i)$ One has the identity
\[
c_{I}(P^{n}(a))=2\prod_{s=1}^r \binom{n}{i_s}\sum_{q=1}^{r}\frac{i_q}{n+1-i_q};
\]

$(ii)$ One has the identity $c_I(P^{n}(a),c_{\mc{T}}(a))=0$. The stably complex manifold $(P^{n}(a),c_{\mc{T}}(a))$ bounds;

$(iii)$ One has the identity
\[
[P^{n}(a)]-[P^{n}(b)]+[P^{n}(b-a),c_{\mc{T}}(b-a)]=0
\]
in the unitary bordism ring $\Omega_{*}^{U}$.
\end{pr}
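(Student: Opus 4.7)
The plan for part (i) is to read off the total Chern class of $P^n(a)$ explicitly from Proposition~\ref{pr:projbndlstr}. Taking total Chern classes of both sides of~(\ref{eq:stand}) and using $x^2=0$ (so that $(1+x)^2=1+2x$), one obtains
\[
c(P^n(a)) = (1+y-ax)(1+y)^{n-1}(1+2x) = (1+y)^n + x(1+y)^{n-1}(2+2y-a),
\]
whose homogeneous parts give $c_j(P^n(a)) = \binom{n}{j}y^j + \bigl[2\binom{n}{j-1}-a\binom{n-1}{j-1}\bigr]xy^{j-1}$. I then form the product $c_I = \prod_{s=1}^r c_{i_s}$, keep only the terms of degree $\le 1$ in $x$ (the rest vanish), apply the ring relation $y^n = axy^{n-1}$ coming from Proposition~\ref{pr:coh} to rewrite the pure $y^n$-term, and evaluate against $[P^n(a)]$ using $\la xy^{n-1},[P^n(a)]\ra = 1$, which is immediate from Leray--Hirsch. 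After substituting $\binom{n}{i_q-1}=\binom{n}{i_q}\frac{i_q}{n+1-i_q}$ and $\binom{n-1}{i_q-1}=\binom{n}{i_q}\frac{i_q}{n}$ and using $\sum_q i_q = n$, the $a$-dependent contributions cancel identically, leaving the claimed formula.

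For part (ii), the structure $c_{\mc{T}}(a)$ of~(\ref{eq:stcplxh}) differs from the standard one by dropping the $2p^*\overline{\eta_1}$ summand, so its total Chern class is just $(1+y-ax)(1+y)^{n-1}$, with $c_j(P^n(a),c_{\mc{T}}(a)) = \binom{n}{j}y^j - a\binom{n-1}{j-1}xy^{j-1}$. Running the computation of (i) verbatim in this simpler setting, the coefficient of $xy^{n-1}$ in $c_I$ reduces to a scalar multiple of $1-\sum_q i_q/n$, which vanishes. Thus all Chern numbers of $(P^n(a),c_{\mc{T}}(a))$ are zero, and Milnor's theorem that a stably complex manifold bounds iff all its Chern numbers vanish (cf.~\cite{st-68}) gives that $(P^n(a),c_{\mc{T}}(a))$ is null-bordant.

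Part (iii) is then a short corollary. Since $\Omega_*^U$ is torsion-free and elements are detected by their Chern numbers, it suffices to verify that every $c_I$ of the combination in question is zero. By (i) the Chern numbers $c_I(P^n(a))$ are independent of $a$, so $c_I(P^n(a))-c_I(P^n(b))=0$; by (ii) the third summand contributes zero as well. Hence $[P^n(a)]-[P^n(b)]+[P^n(b-a),c_{\mc{T}}(b-a)]=0$ in $\Omega_*^U$.

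The main obstacle is the combinatorial bookkeeping in part~(i): one must fix sign conventions so that $c_1(p^*\eta_1^a)=-ax$ is unambiguous, invoke $y^n=axy^{n-1}$ at the correct stage of the expansion, and verify that the two binomial identities above, combined with $\sum_q i_q = n$, force the miraculous cancellation of the $a$-dependent terms. Once~(i) is in place, parts~(ii) and~(iii) come essentially for free.
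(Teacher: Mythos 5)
Your proposal is correct and follows essentially the same route as the paper: expand the total Chern classes from \eqref{eq:stand} and \eqref{eq:stcplxh}, evaluate against $[P^n(a)]$ using the ring relation $y^n=axy^{n-1}$ from Proposition \ref{pr:coh}, cancel the $a$-dependent terms via the same binomial identities and $\sum_q i_q=n$, and conclude (ii) and (iii) by the Milnor--Novikov theorem that Chern numbers detect unitary bordism classes. No substantive differences.
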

\begin{proof}
$(i)$. The Chern class of $TP^n(a)$ is equal to (see \eqref{eq:stand})
%\begin{multline}c(TP^n(a))=
\begin{multline}
\sum_{q=0}^{n}c_q=(1+x)^2(1-ax+y)(1+y)^{n-1}=\\
=1+\sum_{q=0}^{n}\biggl(\binom{n}{q}y^q+(2-a)\binom{n-1}{q-1}xy^{q-1}+2\binom{n-1}{q-2}xy^{q-1}\biggr).
\end{multline}
%\end{multline}
Then by Proposition \ref{pr:coh} one has
\begin{multline}
c_{I}(P^{n}(a))=\la c_{i_1}\cdots c_{i_r}, [P^n(a)]\ra=\\
=a\prod_{l=1}^r\binom{n}{i_l}+\sum_{q=1}^r\biggl((2-a)\binom{n-1}{i_q-1}+2\binom{n-1}{i_q-2}\biggr)\prod_{s\neq q}\binom{n}{i_s}=\\
=2\sum_{q=1}^r\binom{n}{i_q-1}\prod_{s\neq q}\binom{n}{i_s}=2\prod_{s=1}^r \binom{n}{i_s}\cdot\sum_{q=1}^{r}\frac{i_q}{n+1-i_q},
\end{multline}
where by abuse of the notation $\la *, [P^{n}(a)]\ra$ denotes the canonical pairing with the fundamental class\\ $[P^{n}(a)]\in H_{2(k+1)}(P^{n}(a);\Z)$.

$(ii)$. The Chern class of the stably complex structure on $(P^{n}(a),c_{\mc{T}}(a))$ is equal to (see \eqref{eq:stcplxh}):
\[
\sum_{q=0}^{n}c_q=(1+y-ax)(1+y)^{n-1}=1+\sum_{q=1}^{n} \biggl(\binom{n}{q}y^q-a\binom{n-1}{q-1}xy^{q-1}\biggr).
\]
Then one has
\begin{multline}
c_{I}(P^{n}(a),c_{\mc{T}}(a))=\la c_{i_1}\cdots c_{i_r}, [P^{n}(a)]\ra=
a\prod_{s=1}^r\binom{n}{i_s}-a\sum_{q=1}^r\binom{n-1}{i_q-1}\prod_{s\neq q}\binom{n}{i_s}=\\
=a\prod_{s=1}^r\binom{n}{i_s}-a\prod_{s= 1}^r\binom{n}{i_s}\sum_{q=1}^r\frac{i_q}{n}=0.
\end{multline}
The claim about complex bordism class of $(P^{n}(a),c_{\mc{T}}(a))$ follows from theorem of Milnor and Novikov about the complex bordism ring, see \cite[p.117]{st-68}.
%To show that the last expression is equal to zero it remains to use Lemma \ref{lm:binom}.

$(iii)$. Follows from $(i)$ and $(ii)$ immediately.
\end{proof}

\begin{pr}[See {\cite[Section 7.8]{bu-pa-15}}, {\cite[Example 2.4]{lu-pa-14}}]\label{pr:quas}
$P^{n}(a)$ and $(P^{n}(a),c_{\mc{T}}(a))$ are toric and quasitoric manifolds, resp., having the moment polytopes combinatorially equivalent to $\Delta^1\times\Delta^{n-1}$, where $\Delta^{n-1}$ denotes the $(n-1)$-dimensional simplex in $\R^{n-1}$. For any $b\in\Z$ the characteristic matrices of $P^{n}(a)$ and $(P^{n}(a),c_{\mc{T}}(a))$ are $GL_{n}(\Z)$-equivalent to
\[
\begin{pmatrix}
1   & -1 &  0 &  0 &\dots&  0 &  0 \\
b   &  b-a &  1 &  0 &\dots&  0 & -1 \\
0   &  0 &  0 &  1 &\dots&  0 & -1 \\
\vdots&\vdots&\vdots&\vdots&\ddots&\vdots&\vdots\\
0   &  0 &  0 &  0 &\dots&  1 & -1 \\
\end{pmatrix},
\begin{pmatrix}
-1  & -1 &  0 &  0 &\dots&  0 &  0 \\
b   &  b-a &  1 &  0 &\dots&  0 & -1 \\
0   &  0 &  0 &  1 &\dots&  0 & -1 \\
\vdots&\vdots&\vdots&\vdots&\ddots&\vdots&\vdots\\
0   &  0 &  0 &  0 &\dots&  1 & -1 \\
\end{pmatrix},
\]
respectively, in the following facet ordering of of $\Delta^1\times\Delta^{n-1}$: $\Delta^{0}_{1}\times\Delta^{n-1}$, $\Delta^{0}_{2}\times\Delta^{n-1}$, $\Delta^1\times\Delta^{n-2}_{1},\dots,\Delta^1\times\Delta^{n-2}_{n}$ where $\Delta^{n-2}_{1},\dots,\Delta^{n-2}_{n}$ denote the facets of $\Delta^{n-1}$.
\end{pr}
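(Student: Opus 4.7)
The plan is to realise both structures from the standard description of $P^n(a)$ as a generalised Bott manifold of height two, namely the projectivisation of an equivariant sum of line bundles over the toric manifold $\C P^1$. Since $\C P^1$ is toric with ray generators $\pm e_1$, and $\eta_1^a\oplus\underline{\C}^{n-1}$ is an equivariant sum of line bundles over it, the total space admits a toric structure and so does its projectivisation $P^n(a)$. The moment polytope is a $\C P^{n-1}$-bundle over $\Delta^1$ and hence combinatorially equivalent to $\Delta^1\times\Delta^{n-1}$ with the claimed facet structure.

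To compute the characteristic matrix, I would use the standard formula for the fan of the projectivisation of a sum of equivariant line bundles (see \cite{bu-pa-15}). With the natural lifting of the $\pm e_1$ rays reflecting the twist of $\eta_1^a$, one obtains primitive ray generators which, for the facet ordering in the statement and for $b=0$, give the first displayed matrix. To obtain the general form with $b\in\Z$, one applies the $GL_n(\Z)$-transformation that adds $b$ copies of the first row to the second row; this leaves the underlying toric manifold unchanged.

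For the second matrix, describing $(P^n(a),c_{\mathcal{T}}(a))$, I would exploit Proposition \ref{pr:projbndlstr}: the non-standard stably complex structure $c_{\mathcal{T}}(a)$ defined by \eqref{eq:stcplxh} differs from the natural one by replacing the summand $2p^*\overline{\eta_1}$ with a trivial rank-two complex bundle via a real-linear isomorphism. Under the dictionary between quasitoric characteristic data and stably complex structures (see \cite{lu-pa-14}), this modification corresponds precisely to reversing the sign of one of the two characteristic vectors attached to the facets $\Delta^{0}_i\times\Delta^{n-1}$ of the base $\Delta^1$, and this transforms the first matrix into the second one. After the sign flip the first two columns both have negative first coordinate, so the associated fan cannot be complete; this matches the fact that $(P^n(a),c_{\mathcal{T}}(a))$ is quasitoric but not toric.

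The main obstacle is the second step: making precise the correspondence between the realification isomorphism prescribing $c_{\mathcal{T}}(a)$ and the stably complex structure induced by a characteristic map in the sense of Davis--Januszkiewicz / \cite{lu-pa-14}. Once this dictionary is nailed down, the sign-flip description is immediate. The remaining verifications---primitivity of the columns, unimodularity at every vertex of $\Delta^1\times\Delta^{n-1}$, and $GL_n(\Z)$-equivalence under row operations---reduce to routine integer-matrix computations that I would defer to the end.
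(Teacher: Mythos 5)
The paper itself gives no argument for this proposition (it is quoted from \cite{bu-pa-15} and \cite{lu-pa-14}), so your sketch has to stand on its own; in outline it follows the standard route of those sources, but the step in which you produce the stated matrices contains a genuine error. For the first matrix you claim that adding $b$ copies of the first row to the second row of the $b=0$ matrix gives the displayed form. It does not: adding $b$ copies of $(1,-1,0,\dots,0)$ to $(0,-a,1,0,\dots,0,-1)$ yields $(b,\,-b-a,\,1,0,\dots,0,-1)$, not $(b,\,b-a,\,1,0,\dots,0,-1)$. In fact no left $GL_n(\Z)$ transformation relates the displayed first matrices for different $b$: any such transformation must fix the columns $e_2,\dots,e_n$, and then the first two columns force $b'=b$. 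More tellingly, the Davis--Januszkiewicz presentation for the first displayed matrix gives $v_1=v_2=:x$, $v_4=\dots=v_{n+2}=:y$, $v_3=y-(2b-a)x$, hence $H^*\cong\Z[x,y]/(x^2,\;y^{n}-(2b-a)xy^{n-1})$; so that matrix presents $P^{n}(2b-a)$ (up to conjugation), which by Proposition \ref{pr:coh} is not even homotopy equivalent to $P^{n}(a)$ for, say, $n=3$, $a=0$, $b=1$. What your fan computation (done correctly) actually establishes is the case $b=0$, together with the observation that the printed second row must read $(b,\,a-b,\dots)$ or $(-b,\,b-a,\dots)$ for the ``any $b$'' claim to hold; by contrast, for the second matrix the freedom in $b$ is genuine, because there the first row is $(-1,-1,0,\dots,0)$ and adding $(b'-b)$ copies of it to the second row passes between any two values of $b$. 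Your argument does not distinguish these two situations, and as written the first half of step two fails. (Note also that allowing column sign changes in the notion of equivalence would not rescue it without collapsing the very distinction between the two displayed matrices, which differ exactly by the sign of the first column.)

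The second gap is the one you flag yourself: the dictionary between the realification isomorphism \eqref{eq:stcplxh} and the omniorientation data. The intended mechanism is correct --- in the canonical decomposition $TM\oplus\underline{\R}^{4}\cong\rho_1\oplus\dots\oplus\rho_{n+2}$ of a quasitoric manifold over $\Delta^1\times\Delta^{n-1}$, reversing the sign of the characteristic vector of $F_1$ replaces $\rho_1$ by $\overline{\rho_1}$, and since $\rho_1\simeq\rho_2\simeq p^*\overline{\eta_1}$ one gets $\overline{\rho_1}\oplus\rho_2\simeq p^*\eta_1\oplus p^*\overline{\eta_1}\simeq\underline{\C}^2$, turning \eqref{eq:stand} into \eqref{eq:stcplxh} --- but this identification is precisely the content of the cited \cite{lu-pa-14}, and in your proposal it is deferred rather than proved. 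Until you either carry out this comparison of stably complex structures or explicitly invoke that reference, the claim about the second matrix (and hence about $(P^{n}(a),c_{\mc{T}}(a))$) remains unestablished.
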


We remind the clutching construction for vector bundles (\cite[pp.20--23]{at-67}).

\begin{defn}
Let $X_1,X_2$ be compact topological spaces with complex vector bundles $\xi_1\to X_1,\ \xi_2\to X_2$. Suppose that there is given an isomorphism\\
$\phi:\ (\xi_1)|_{Y}\to (\xi_2)|_{Y}$ of vector bundles over the intersection $Y=X_1\cap X_2$. Then the (locally trivial) vector bundle $\xi_1\cup_{\phi}\xi_2\to X=X_1\cup X_2$ is defined as the quotient of $\xi_1\oplus \xi_2$ by the equivalence relation identifying the vector $e\in(\xi_1)|_{Y}$ with the vector $\phi(e)\in(\xi_2)|_{Y}$.
\end{defn}

The clutching construction for real vector bundles is defined in a similar way. %One can also consider clutching of commplex or real vector bundle isomorphisms, as these are some $Hom$-vector bundles.

\begin{ex}\label{ex:s2}
Take two $2$-dimensional balls $D_1,\ D_2$ with trivial linear vector bundles $\underline{\C}\to D_1,\ \underline{\C}\to D_2$. Embed the positively oriented circle\\
$S^1=\lb e^{2\pi i t}|\ t\in[0,1)\rb\subset\C$. Consider the isomorphism $f^a: \underline{\C}\to\underline{\C}$ of vector bundles over $S^1$ given by the formula $f(z,w)=(z,z^a\cdot w)$, $a\in\Z$. Clearly, $f^a\circ f^b=f^{a+b}$ for any $a,b\in\Z$. The union along the boundaries $D_1\cup_{S^1} D_2$ is the $2$-dimensional sphere $S^2$. The vector bundle $\underline{\C}\cup_{f^a}\underline{\C}\to S^2$ is isomorphic to the pull-back of $\overline{\eta_1^a}\to\C P^1$ under the homeomorphism $S^2\simeq \C P^1$ (see \cite[p.49]{at-67}). Hence, by Proposition \ref{pr:natp}, the vector bundle $\underline{\C}^2\cup_{f^2\oplus Id}\underline{\C}^2\to S^2$ is isomorphic to the pull-back of $\overline{\eta_1^2}\oplus\underline{\C}\simeq 2\overline{\eta_1}\simeq T\C P^1\oplus\underline{\C}\to\C P^1$ under the homeomorphism $S^2\simeq \C P^1$
\end{ex}

\begin{ex}\label{ex:bundle}
Consider the projectivisation $p:\ P^n(a)\to\C P^1$. The restriction of $\overline{\zeta}\otimes (p^*\eta^a_{1}\oplus \underline{\C}^{n-1})\to P^n(a)$ to the preimage $p^{-1}(S^1)=S^1\times \C P^{n-1}$ of the equatorial circle coincides with the restriction of $\overline{\zeta}\otimes \underline{\C}^{n}$. Denote by $F_{n}(a)$ the isomorphism $f^a\oplus Id:\ \underline{\C}^{n}\to \underline{\C}^{n}$ of vector bundles, i.e. the clutching function of the vector bundle $\overline{\eta_1^a}\oplus\underline{\C}^{n-1}\to\C P^1\simeq S^2$ w.r.t. the union $D_1\cup_{S^1} D_2$. Take two copies $X_1,X_2$ of the cartesian product $D^2\times\C P^{n-1}$, so that $X_1\cap X_2=S^1\times\C P^{n-1}$.  Then the clutching function of $\overline{\zeta}\otimes (p^*\eta^a_{1}\oplus \underline{\C}^{n-1})\oplus 2p^*\overline{\eta}_{1}\to P^n(a)$ appearing in the natural complex structure on $P^n(a)$ (see \eqref{eq:stand}) w.r.t. the decomposition $X_1\cup X_2$ is the isomorphism
\[
Id\otimes F_{n}(-a)\oplus F_{2}(2):\ \overline{\zeta}\otimes \underline{\C}^{n}\oplus\underline{\C}^2\to \overline{\zeta}\otimes \underline{\C}^{n}\oplus\underline{\C}^2
\]
of vector bundles over $S^1\times\C P^{n-1}$. Next, the clutching function of\\
$\overline{\zeta}\otimes (p^*\eta_{1}^{a}\oplus \underline{\C}^{n-1})\oplus \underline{\C}^2$ appearing in the non-standard stably complex structure $c_{\mc{T}}(a)$ on $P^{n}(a)$ (see \eqref{eq:stcplxh}) w.r.t. the decomposition $X_1\cup X_2$ is the isomorphism
\[
Id\otimes F_{n}(-a)\oplus F_{2}(0):\ \overline{\zeta}\otimes \underline{\C}^{n}\oplus\underline{\C}^2\to \overline{\zeta}\otimes \underline{\C}^{n}\oplus\underline{\C}^2
\]
of vector bundles over $S^1\times\C P^{n-1}$.
\end{ex}

\section{Geometric construction of bordism using the gluing lemma}

In this Section we use the gluing lemma (Lemma \ref{lm:glue} below) in unitary bordism theory to produce the explicit bordism between any two projective fibre bundles over $\C P^1$ of the same dimension. %It is the second proof of Theorem \ref{thm:main}.

First remind the construction of the opposite stably complex structure on a stably complex manifold.
\begin{defn}
Let $(M^{2n},c_{\mc{T}})$ be a stably complex compact manifold (closed or with boundary) of dimension $2n$ with the real vector bundle isomorphism
\[
c_{\mc{T}}:\ TM\oplus\underline{\R}^{2(m-n)}\to \xi,
\]
where $\xi$ is a complex vector bundle over $M$. Then the stably complex manifold $(M^{2n},\overline{c_{\mc{T}}})$ is defined as the smooth manifold $M$ with opposite orientation, endowed with another stably complex structure $\overline{c_{\mc{T}}}$ given by the composition
\[
\overline{c_{\mc{T}}}:\ TM\oplus\underline{\R}^{2(m-n+1)}\overset{c_{\mc{T}}\oplus Id}{\longrightarrow} \xi\oplus\underline{\C}\overset{Id\oplus J}{\longrightarrow} \xi\oplus\underline{\C},
\]
where $J:\ \underline{\C}\to \underline{\C}$ is the fiberwise operator of complex conjugation over $M$. We denote $(M^{2n},\overline{c_{\mc{T}}})$ by $\overline{M}$.
\end{defn}
We remark that $[\overline{M}]=-[M]$ in the complex bordism ring.

Let $(X_i,c_{\mc{T},i},\xi_i)$ be a stably complex compact manifold of dimension $2n$ with boundary diffeomorphic to a smooth manifold $M$, so that
\[
c_{\mc{T},i}:\ TX_i\oplus\underline{\R}^{2(m-n)}\simeq \xi_i,
\]
where $i=1,2$. (W.l.g. we assume that complex vector bundles $\xi_1,\xi_2$ have ranks equal to $m$ by adding trivial vector bundles.) The collar lemma implies that the normal bundles of the natural inclusions of $M$ to $X_1, X_2$ are trivial. Hence,\\
$X_1\cup_{M} X_2$ is a smooth $2n$-dimensional manifold. In order to induce a stably complex structure on $X_1\cup_{M} X_2$ from $X_1, X_2$ we introduce the notion of a $\tau$-isomorphism.

Let $f:\ (\xi_1)|_{M}\to (\xi_2)|_{M}$ be an isomorphism of complex vector bundles. One has $(T X_1)|_M=(T X_2)|_M=TM$, so we define $g=(c_{\mc{T},2})|_{M}^{-1}\circ f\circ(c_{\mc{T},1})|_{M}$ completing the following commutative diagram
\begin{equation}\label{eq:comm0}
\begin{tikzcd}
TM\oplus\underline{\R}^{2(m-n)}\arrow{r}{(c_{\mc{T},1})|_{M}}\arrow{d}{g} & (\xi_{1})|_{M}\arrow{d}{f}\\
TM\oplus\underline{\R}^{2(m-n)}\arrow{r}{(c_{\mc{T},2})|_{M}} & (\xi_{2})|_{M}
\end{tikzcd}
\end{equation}
of isomorphisms of \textit{real} vector bundles. Due to commutativity of \eqref{eq:comm0} and \cite[p.22]{at-67} we have the isomorphism
\[
\phi:\ (T X_1\oplus\underline{\R}^{2(m-n)})\cup_{g} (TX_2\oplus\underline{\R}^{2(m-n)})\simeq \xi_1\cup_{f}\xi_2,
\]
of \textit{real} vector bundles defined as $\iota_1$ over $X_1$ and $\iota_2$ over $X_2$.
%where $\xi_i\to M_i$ is the corresponding complex vector bundle, $i=1,2$.

\begin{defn}
Any isomorphism
\[
\tau:\ T(X_1\cup_{M} X_2)\oplus\underline{\R}^{2(m-n)}\simeq (T X_1\oplus\underline{\R}^{2(m-n)})\cup_{g} (TX_2\oplus\underline{\R}^{2(m-n)}),
\]
of real vector bundles is called the $\tau$-isomorphism for the data\\
$(X_1,\iota_1,\xi_1)$, $(X_2,\iota_2,\xi_2)$, $f$.
\end{defn}

\begin{pr}\label{pr:tau}
Suppose that there exists a $\tau$-isomorphism $\tau$ for the data\\
$(X_1,\iota_1,\xi_1)$, $(X_2,\iota_2,\xi_2), f$. Then the isomorphism
\[
\phi\circ\tau:\ T(X_1\cup_{M} X_2)\oplus\underline{\R}^{2(m-n)}\to \xi_1\cup_{f}\xi_2
\]
of real vector bundles defines the stably complex structure on the manifold $X_1\cup_{M} X_2$.
\end{pr}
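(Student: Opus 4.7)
The strategy is to verify that $\phi\circ\tau$ satisfies the defining conditions of a stably complex structure on $X_1\cup_{M} X_2$. First, I would observe that since $f$ is an isomorphism of \emph{complex} vector bundles, the clutched bundle $\xi_1\cup_{f}\xi_2\to X_1\cup_{M} X_2$ inherits a canonical complex vector bundle structure from $\xi_1$ and $\xi_2$ (this is standard, since the clutching relation is $\C$-linear on fibres). Consequently, by definition, any isomorphism of real vector bundles from $T(X_1\cup_{M} X_2)\oplus\underline{\R}^{2(m-n)}$ onto $\xi_1\cup_{f}\xi_2$ is a stably complex structure on the smooth manifold $X_1\cup_{M} X_2$.

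Second, I would verify that $\phi$ is a well-defined isomorphism of real vector bundles. The only substantive point is consistency along $M=X_1\cap X_2$: the piecewise recipe ``$\iota_1$ on $X_1$ and $\iota_2$ on $X_2$'' must descend to the respective quotients defining the clutched bundles. This is precisely the content of the commutative square \eqref{eq:comm0}: the identity $(c_{\mc{T},2})|_M\circ g=f\circ (c_{\mc{T},1})|_M$ transports the clutching relation $v\sim g(v)$ on the tangent side to the clutching relation $e\sim f(e)$ on the $\xi$-side, so $\iota_1$ and $\iota_2$ glue to a single bundle isomorphism $\phi$ on the quotient. This is the standard patching of bundle morphisms via clutching functions, as recalled in \cite[p.22]{at-67}.

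The proof is then a one-line composition: $\tau$ is an isomorphism of real vector bundles by hypothesis, $\phi$ is one by the previous step, hence $\phi\circ\tau$ is an isomorphism of real vector bundles whose target $\xi_1\cup_{f}\xi_2$ carries a canonical complex structure. This is, by definition, the desired stably complex structure.

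I expect the main obstacle to lie not in this proposition, whose proof is essentially formal once the definitions are unwound, but rather in producing the $\tau$-isomorphism in concrete situations. Indeed, the existence of $\tau$ is a nontrivial condition which in the applications of the paper (for instance, the gluing of copies of $D^2\times\C P^{n-1}$ with prescribed clutching data) requires a careful tangent-bundle analysis along the common boundary. Proposition \ref{pr:tau} should therefore be viewed as a clean packaging of the stable-complex-structure descent, reducing the real work to the geometric construction of $\tau$.
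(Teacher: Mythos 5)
Your proposal is correct and matches the paper's treatment: the paper offers no separate argument for this proposition precisely because, as you observe, the complex structure on $\xi_1\cup_f\xi_2$ comes for free from $\C$-linearity of $f$, and the well-definedness of $\phi$ is exactly the descent through the commutative square \eqref{eq:comm0} already established (with the reference to \cite[p.22]{at-67}) before the statement. Your closing remark that the real content lies in exhibiting $\tau$ in concrete cases is also in line with how the paper uses this proposition in Lemma \ref{lm:glue} and Theorem \ref{thm:gluenecc}.
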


If there is a stably complex structure on $X_1\cup_{M} X_2$ restricting to the given stably complex structures on $X_1$, $X_2$ with the given clutching function $f$, then the $\tau$-isomorphism for the corresponding data exists.

Now we give the refined version of the gluing lemma from \cite{to-00} in terms of clutching functions of vector bundles. Let $(A,c_{\mc{T},A},\xi_A),(B,c_{\mc{T},B},\xi_B),(C,c_{\mc{T},C},\xi_C)$ be compact stably complex $2n$-dimensional manifolds with boundary diffeomorphic to a stably complex manifold $(M,c_{\mc{T},M},\xi_M)$. (W.l.g. we assume that $\xi_A,\xi_B,\xi_C,\xi_M$ have ranks equal to $m$ by adding trivial vector bundles.) For instance, one has
\[
c_{\mc{T},A}:\ TA\oplus\underline{\R}^{2(m-n)}\simeq_{\R} \xi_A.
\]
Let $f_A:\ (\xi_A)|_{\partial A}\to\xi_M,\ f_B:\ (\xi_B)|_{\partial B}\to\xi_M$ and $f_C:\ (\xi_C)|_{\partial C}\to\xi_M$ be isomorphisms of complex vector bundles. Let
\[
f_{A,B}:=f_B^{-1}\circ f_A,\ f_{B,C}:=f_C^{-1}\circ f_B,\ f_{C,A}:=f_A^{-1}\circ f_C.
\]
We define
\[
g_{A,B}=(c_{\mc{T},B})|_{M}^{-1}\circ f_{A,B}\circ(c_{\mc{T},A})|_{M},\ g_{B,C}=(c_{\mc{T},C})|_{M}^{-1}\circ f_{B,C}\circ(c_{\mc{T},B})|_{M},
\]
\[
g_{C,A}=(c_{\mc{T},A})|_{M}^{-1}\circ f_{C,A}\circ(c_{\mc{T},C})|_{M}.
\]
% yielding the following commutative diagram
%\begin{equation}\label{eq:comm1}
%\begin{tikzcd}
%TM\oplus\underline{\R}^{2(m-n)}\arrow{r}{(\iota_{A})|_{M}}\arrow{d}{\phi_{A,B}} & (\xi_{A})|_{M}\arrow{d}{f_{A,B}}\\
%TM\oplus\underline{\R}^{2(m-n)}\arrow{r}{(\iota_B)|_{M}} & (\xi_{B})|_{M}
%\end{tikzcd}
%\end{equation}
%of isomorphisms of \textit{real} vector bundles. The isomorphisms $\phi_{B,C},\phi_{C,A}$ are defined by the similar formulas:
%The real and complex clutching constructions give the real and the complex vector bundles $TA\cup_{\phi_{A,B}} TB,\ \xi_A \cup_{f_B^{-1}\circ f_A}\xi_B$ over $A\cup B$, resp, etc. Due to commutativity of \eqref{eq:comm1} and \cite[p.22]{at-67}
The isomorphisms
\[
\phi_{A,B}:\ (T A\oplus\underline{\R}^{2(m-n)})\cup_{g_{A,B}} (TB\oplus\underline{\R}^{2(m-n)})\simeq \xi_A\cup_{f_{A,B}}\xi_B,
\]
\[
\phi_{B,C}:\ (T B\oplus\underline{\R}^{2(m-n)})\cup_{g_{B,C}} (TC\oplus\underline{\R}^{2(m-n)})\simeq \xi_B\cup_{f_{B,C}}\xi_C,
\]
\[
\phi_{C,A}:\ (T C\oplus\underline{\R}^{2(m-n)})\cup_{g_{C,A}} (TA\oplus\underline{\R}^{2(m-n)})\simeq \xi_C\cup_{f_{C,A}}\xi_A,
\]
exist and are well-defined.
%One has $(T A)|_M=(T B)|_M=TM$, so w of \textit{real} vector bundles defined as $\iota_A$ over $A$ and $\iota_B$ over $B$. The isomorphisms  of real vector bundles

Now we would like to have the following $\tau$-isomorphisms:
\[
\tau_{A,B}:\ T(A\cup B)\oplus\underline{\R}^{2(m-n)}\simeq (T A\oplus\underline{\R}^{2(m-n)})\cup_{g_{A,B}} (TB\oplus\underline{\R}^{2(m-n)}),
\]
\[
\tau_{B,C}:\ T(B\cup C)\oplus\underline{\R}^{2(m-n)}\simeq (T B\oplus\underline{\R}^{2(m-n)})\cup_{g_{B,C}} (TC\oplus\underline{\R}^{2(m-n)}),
\]
\[
\tau_{C,A}:\ T(C\cup A)\oplus\underline{\R}^{2(m-n)}\simeq (T C\oplus\underline{\R}^{2(m-n)})\cup_{g_{C,A}} (TA\oplus\underline{\R}^{2(m-n)}),
\]
in order to induce the stably complex structures on smooth manifolds $A\cup_{M} B$, $B\cup_{M} C$, $C\cup_{M} A$. %In practice, we reverse engineer the existence of these isomorphisms from the existence of already known stably complex structure on these manifolds. We call the isomorphisms $\tau_{A,B},\tau_{B,C},\tau_{C,A}$ the $\tau$-isomorphisms.

\begin{lm}\label{lm:glue}
Suppose that the $\tau$-isomorphisms $\tau_{A,B},\tau_{B,C},\tau_{C,A}$ exist. Then the stably complex manifold $W^{2n+1}$ obtained from the product $M\times H$ of $M$ and the $6$-gon $H\subset\R^2$ by attaching $A\times[0,1], B\times[0,1]$ and $C\times[0,1]$ to $M\times H_1, M\times H_3$ and $M\times H_5$, resp., is well-defined, where $H_1,\dots,H_6$ are edges of $H$ in the counterclockwise order. The connected boundary components of $W$ are stably complex manifolds $A\cup_{M} B$, $B\cup_{M} C$, $C\cup_{M} A$ with stably complex structures $c_{\mc{T},A,B}=\phi_{A,B}\circ\tau_{A,B}$, $c_{\mc{T},B,C}=\phi_{B,C}\circ\tau_{B,C}$, $c_{\mc{T},C,A}=\phi_{C,A}\circ\tau_{C,A}$, resp. In particular, one has the relation
\begin{equation}\label{eq:totaro}
[A\cup_{M} B, c_{\mc{T},A,B}]+[B\cup_{M} C, c_{\mc{T},B,C}]+[C\cup_{M} A, c_{\mc{T},C,A}]=0
\end{equation}
in the bordism group $\Omega^{U}_{2n}$.
\end{lm}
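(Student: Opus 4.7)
The strategy is to construct $W$ in two stages---first as a smooth compact $(2n+1)$-manifold with boundary, then as a stably complex manifold---and then to read off its three boundary components. First I would start from $M\times H$ and attach the cylinders $A\times[0,1]$, $B\times[0,1]$, $C\times[0,1]$ along $M\times H_1$, $M\times H_3$, $M\times H_5$, using collar identifications of $\partial A,\partial B,\partial C$ with $M$ and of each $H_i$ with $[0,1]$. Standard smoothing of the six corners of $H$ turns the result into a smooth compact $(2n+1)$-manifold $W$. Its boundary breaks into three closed pieces: the component adjacent to the edge $H_2$ is the smoothing of $(A\times\lb 1\rb)\cup(M\times H_2)\cup (B\times\lb 0\rb)$, diffeomorphic to $A\cup_M B$; the components adjacent to $H_4$ and $H_6$ are analogously diffeomorphic to $B\cup_M C$ and $C\cup_M A$.

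Next I would equip $W$ with a stably complex structure. Combine $c_{\mc{T},M}$ with the canonical complex trivialisation $TH\simeq_{\R}\underline{\C}$ of the oriented hexagon to obtain $T(M\times H)\oplus\underline{\R}^{2(m-n)}\simeq\xi_M\oplus\underline{\C}$; on each cylinder $A\times[0,1]$ (resp.\ $B\times[0,1]$, $C\times[0,1]$) use the product of $c_{\mc{T},A}$ (resp.\ $c_{\mc{T},B}$, $c_{\mc{T},C}$) with the trivial $\R$-direction, stabilised by one extra $\underline{\C}$, to get $\xi_A\oplus\underline{\C}$ (resp.\ $\xi_B\oplus\underline{\C}$, $\xi_C\oplus\underline{\C}$). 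Along the attaching edges $M\times H_1$, $M\times H_3$, $M\times H_5$ these complex bundles are identified via $f_A\oplus\mathrm{Id}$, $f_B\oplus\mathrm{Id}$, $f_C\oplus\mathrm{Id}$, and the clutching construction then produces a stably complex structure on the smoothed $W$.

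Finally I would identify the boundary structures and deduce the relation. Restricting the stably complex structure on $W$ to the $H_2$-component: the $A$-piece carries $\xi_A\oplus\underline{\C}$, the collar $M\times H_2$ carries $\xi_M\oplus\underline{\C}$, and the $B$-piece carries $\xi_B\oplus\underline{\C}$, with seam isomorphisms $f_A\oplus\mathrm{Id}$ and $(f_B\oplus\mathrm{Id})^{-1}$. Collapsing the trivial cylinder $M\times H_2$ yields the composite clutching $f_B^{-1}\circ f_A=f_{A,B}$, so this boundary component is $A\cup_M B$ with the stably complex structure $\phi_{A,B}\circ\tau_{A,B}=c_{\mc{T},A,B}$ of Proposition \ref{pr:tau}. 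The analogous arguments near $H_4$ and $H_6$ complete the identification, so that $W$ is a stably complex null-bordism of $(A\cup_M B,c_{\mc{T},A,B})\sqcup(B\cup_M C,c_{\mc{T},B,C})\sqcup(C\cup_M A,c_{\mc{T},C,A})$, yielding \eqref{eq:totaro} in $\Omega^{U}_{2n}$. The main obstacle is the coherence of the stably complex structure at the six corners of $H$, where two product stably complex structures meet along two transverse edges; this is precisely where the hypothesis that the $\tau$-isomorphisms exist is used, supplying the homotopy between the two stabilisations that allows the corner to be smoothed as a stably complex manifold rather than merely as a smooth one.
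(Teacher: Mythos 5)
Your argument is correct and follows essentially the same route as the paper: the paper's proof simply cites Totaro's Lemma 2.1 from \cite{to-00} for the hexagon-with-cylinders construction of $W$ that you write out explicitly, and invokes Proposition \ref{pr:tau} for the identification of the boundary components with $(A\cup_M B,\phi_{A,B}\circ\tau_{A,B})$ etc., exactly as you do. The only harmless imprecision is the rank bookkeeping on the cylinders, where the interval direction should be paired with one extra trivial $\underline{\R}$ (not $\underline{\C}$) summand to produce the $\underline{\C}$ in $\xi_A\oplus\underline{\C}$.
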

\begin{proof}
The construction of the manifold $W$ is given in \cite[Lemma 2.1]{to-00}. The other claims follow from Proposition \ref{pr:tau}.
\end{proof}

We stress that the stably complex structures on $A\cup_{M} B$, $B\cup_{M} C$, $C\cup_{M} A$ from Lemma \ref{lm:glue} depend not only on the stably complex structures on $A,B,C$ but also on the vector bundle isomorphisms $f_A, f_B, f_C$.

\begin{ex}
Take three copies of $2$-ball $D^2$ as $A,B,C$, resp. Choose\\
$f_A=f^1$, $f_B=f^{-1}$, $f_C=f^{-1}$. The manifolds $A\cup_{M} B$, $B\cup_{M} C$, $C\cup_{M}A$ are spheres $S^2$ with different stably complex structures corresponding to the vector bundles $\overline{\eta_1^2}\oplus\underline{\C}$, $\underline{\C}^2$, $\eta_1^2\oplus\underline{\C}$. Their respective clutching functions are $f^2\oplus Id, f^0\oplus Id, f^{-2}\oplus Id$ (see Example \ref{ex:s2}). Clearly, the corresponding $\tau$-isomorphisms exist. The relation \eqref{eq:totaro} takes form
\begin{equation}
[\C P^1]+[\C P^1, c^{0}_{\mc{T}}]-[\C P^1]=0,
\end{equation}
which indeed holds in $\Omega^{U}_{2}$.
\end{ex}

\begin{thm}\label{thm:gluenecc}
Let $a,b\in\Z$. Then Lemma \ref{lm:glue} yields the stably complex manifold $W^{2n+1}$ with boundary consisting of disjoint union of stably complex manifolds $P^{n}(a),\overline{P^{n}(b)}$ and $(P^{n}(b-a),c_{\mc{T}}(b-a))$ for some data.
\end{thm}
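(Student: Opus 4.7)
The plan is to apply the refined gluing Lemma \ref{lm:glue} to the three identical pieces $A=B=C=D^2\times\C P^{n-1}$, each carrying the stably complex structure inherited from the trivial projective bundle $P^n(0)=\C P^1\times\C P^{n-1}$, so that $M=S^1\times\C P^{n-1}$ and the three complex vector bundles $\xi_A=\xi_B=\xi_C$ coincide with a common bundle $\xi$ over $D^2\times\C P^{n-1}$. Recall from Example \ref{ex:bundle} that, with respect to the decomposition $\C P^1=D_1\cup_{S^1}D_2$, the clutching function producing $P^n(a)$ is
\[
\Phi(a):=Id\otimes F_n(-a)\oplus F_2(2),
\]
while the one producing $(P^n(c),c_{\mc{T}}(c))$ is
\[
\Psi(c):=Id\otimes F_n(-c)\oplus F_2(0).
\]
Using $F_n(p)\circ F_n(q)=F_n(p+q)$, a direct computation gives the multiplicative identity $\Phi(a)\circ\Psi(b-a)=\Phi(b)$.

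Now I take $f_A=Id_{\xi|_M}$, $f_B=\Phi(a)^{-1}$ and $f_C=\Psi(b-a)$. The induced cyclic clutching data are then
\[
f_{A,B}=f_B^{-1}\circ f_A=\Phi(a),\quad f_{C,A}=f_A^{-1}\circ f_C=\Psi(b-a),
\]
\[
f_{B,C}=f_C^{-1}\circ f_B=\Psi(b-a)^{-1}\circ\Phi(a)^{-1}=\Phi(b)^{-1}.
\]
By Example \ref{ex:bundle}, the boundary component $A\cup_M B$ carrying clutching $\Phi(a)$ is canonically identified with the stably complex manifold $P^n(a)$, and $C\cup_M A$ carrying $\Psi(b-a)$ is identified with $(P^n(b-a),c_{\mc{T}}(b-a))$. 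For the middle component, $\Phi(b)^{-1}$ is precisely the clutching $\Phi(b)$ read with the two disc halves swapped, which yields the bundle of $P^n(b)$ but equipped with the reversed orientation on the $S^2$-factor; this is by definition the stably complex manifold $\overline{P^n(b)}$.

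The existence of the three $\tau$-isomorphisms $\tau_{A,B},\tau_{B,C},\tau_{C,A}$ follows from the remark following Proposition \ref{pr:tau}: since the clutching functions $\Phi(a)$, $\Phi(b)^{-1}$, $\Psi(b-a)$ were designed to reproduce the genuine stably complex structures on $P^n(a)$, $\overline{P^n(b)}$, and $(P^n(b-a),c_{\mc{T}}(b-a))$, the required real-bundle isomorphisms are read off from those structures. Lemma \ref{lm:glue} then produces the desired stably complex manifold $W^{2n+1}$, and relation \eqref{eq:totaro} specialises precisely to Proposition \ref{pr:hijnum}(iii). The one delicate step in this argument is the orientation bookkeeping that identifies the clutching $\Phi(b)^{-1}$ over $B\cup_M C$ with $\overline{P^n(b)}$; this is the main obstacle and rests on the disc-swap symmetry of the clutching construction together with the orientation conventions forced by the hexagonal attachment scheme of Lemma \ref{lm:glue}.
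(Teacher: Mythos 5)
Your proposal is correct and follows essentially the same route as the paper: apply the refined gluing lemma to three copies of $D^2\times\C P^{n-1}$ with isomorphisms $f_A,f_B,f_C$ chosen (via Example \ref{ex:bundle}) so that the pairwise clutching functions $f_{A,B},f_{B,C},f_{C,A}$ are exactly those of $P^n(a)$, $\overline{P^n(b)}$ and $(P^n(b-a),c_{\mc{T}}(b-a))$, with the $\tau$-isomorphisms supplied by the remark after Proposition \ref{pr:tau}. The paper's concrete choice ($Id\otimes F_n(0)\oplus F_2(1)$, $Id\otimes F_n(a)\oplus F_2(-1)$, $Id\otimes F_n(b)\oplus F_2(-1)$) differs from yours only in which boundary component carries $\overline{P^n(b)}$ versus $(P^n(b-a),c_{\mc{T}}(b-a))$, which is immaterial to the statement.
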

\begin{proof}
Substitute three copies of $D^2\times\C P^{n-1}$ into $A,B,C$, so that\\
$M=S^1\times\C P^{n-1}$. Substitute $Id\otimes F_n(0)\oplus F_2(1)$, $Id\otimes F_n(a)\oplus F_2(-1)$,\\
$Id\otimes F_n(b)\oplus F_2(-1)$ into $f_A,f_B,f_C$, resp. It follows from Example \ref{ex:bundle} that the corresponding $\tau$-isomorphisms exist. Hence, Lemma \ref{lm:glue} applies and yields the stably complex manifold from the claim.
\end{proof}
%The restriction $(\eta_{1}^c\oplus\underline{\C}^{n-1})|_{S^1}$ is trivial for any $c\in\Z$.  Now consider the projectivisation $p:\ P^{n}(c)\to\C P^1$, $c\in\Z$.

\section{Geometric construction of bordism using manifolds with quasitoric boundary}

In this Section we utilize the Sarkar's construction of orbifolds with quasitoric orbifold boundary \cite{sou-15} in order to give another explicit complex bordism between any two projective fibre bundles over $\C P^1$ of the same dimension.

We follow the notions and denotations from \cite{sou-15}.

\begin{defn}
An $(n+1)$-dimensional (combinatorial) simple polytope $Q\subset\R^{n+1}$ is said to be a polytope with exceptional facets $Q_1,\dots,Q_k$, if $Q_i\cap Q_j$ is empty for any $1\leq i< j\leq k$, and $\vertex Q=\bigcup \vertex Q_i$, where $\vertex Q$ denotes the set of vertices of $Q$. A simple polytope $Q$ with exceptional facets $Q_1,\dots,Q_k$ is denoted by $\lb Q\setminus Q_1,\dots, Q_k\rb$.
\end{defn}

Let $\mc{F}(Q)=\lb F_1,\dots, F_m\rb$ be the set of all facets of the polytope $Q$.

\begin{defn}
A function $\lambda:\mc{F}(Q)\to\Z^n$ is called an isotropy function on $\lb Q\setminus Q_1,\dots, Q_k\rb$, if the vectors $\lambda(F_{i_1}),\dots,\lambda(F_{i_r})$ are linearly independent in $\Z^n$ whenever the intersection of the facets $F_{i_1},\dots, F_{i_r}$ is nonempty. The vector $\lambda(F_i)$ is called an isotropy vector assigned to the facet $F_i$, $i=1,\dots,m$.
\end{defn}

Let $F$ be a non-empty codimension $l$ face in $Q$. If $F=P$, then we let $M(P)=\Z^n$. Otherwise, $0<l\leq n+1$. If $F$ is a face of $Q_i$ for some $i\in\lb 1,\dots,k\rb$, then $F$ is the intersection of a unique collection of $l$ facets $F_{i_1},\dots, F_{i_{l-1}},Q_i$ of $Q$. Otherwise, $F$ is the intersection of $l$ facets $F_{i_1},\dots, F_{i_{l}}$ of $Q$. Let
\[
M(F)=
\begin{cases}
\Z\la \lambda(F_{i_j})|\ j=1,\dots,l-1\ra,\ if\ F=F_{i_1}\cap\dots\cap F_{i_{l-1}}\cap Q_i,\\
\Z\la \lambda(F_{i_j})|\ j=1,\dots,l\ra,\ if\ F=F_{i_1}\cap\dots\cap F_{i_{l}},
\end{cases}
\]
having $M(F)\subseteq \Z^n$, where $\Z\la*\ra$ denotes the $\Z$-linear hull of a set in $\Z^n$.

For any non-empty face $F$ of codimension $l$ in $Q$ consider a compact torus
\[
\T^n_{M(F)}=(M(F)\otimes\R)/M(F)
\]
of dimension $l-1$ or $l$ depending on the situation of the face $F$. The inclusion $M(F)\subseteq \Z^n$ induces the natural homomorphism
\[
f_F:\ \T_{M(F)}\subseteq \T^n
\]
for any face $F\subset Q$, where $\T^n=(\Z^n\otimes\R)/\Z^n$. Denote the image of $f_F$ by $Im(f_F)$.
\begin{defn}
$W(Q,\lambda)$ is the quotient space
\[
Q\times \T^{n}/\sim,
\]
w.r.t. the equivalence relation
\[
(x,t_1)\sim (x,t_2)\Leftrightarrow t_1^{-1}t_2\in Im(f_F),
\]
where $F$ is the unique face of $Q$ containing $x$ in its relative interior. The space $W(Q,\lambda)$ is a $\T$-space where the action is induced by left action of $\T^n$ on itself. Let
\[
\pi: W(Q,\lambda)\to Q
\]
be the projection map defined by the formula $\pi([x,t])=x$. We consider the standard orientation of $\T^n$ and orientation on $Q$ induced from the ambient space $\R^{n+1}$.
\end{defn}

Consider an $(n+1)$-dimensional simple polytope $\lb Q\setminus Q_1,\dots, Q_k\rb$ in $\R^{n+1}$ with distinguished facets, and an isotropy function $\lambda:\mc{F}\to\Z^n$. The space $W(Q,\lambda)$ is a $(2n+1)$-dimensional orbifold with boundary consisting of qusitoric orbifolds, generally speaking. Define a function $\xi_i:\mc{F}(Q_i)\to\Z^n$ by the formula
\begin{equation}\label{eq:charf}
\xi_i(Q_i\cap F_j)=\lambda(F_j).
\end{equation}
The function $\xi_i$ is easily seen to be a di-characteristic function on $Q_i$ (see \cite{sou-15}).

\begin{thm}[{\cite[Corollary 4.5, Theorem 5.5]{sou-15}}]\label{thm:sarkar}
Suppose that $\lambda(F_{i_1}),\dots,\lambda(F_{i_r})$ is a part of a basis in $\Z^n$ whenever the intersection $F_{i_1}\cap\dots\cap F_{i_r}$ is nonempty. Then $W(Q,\Lambda)$ is a smooth stably complex $(2n+1)$-dimensional manifold with the boundary $\bigsqcup M(Q_i,\xi_i)$. The function $\xi_i$ is a characteristic function on $Q_i$, so $M(Q_i,\xi_i)$ is a quasitoric manifold, $i=1,\dots,k$. The restriction of the stably complex structure on $W(Q,\Lambda)$ to $M(Q_i,\xi_i)$ coincides with the canonical stably complex structure on $M(Q_i,\xi_i)$. %In particular, $W(Q,\Lambda)$ is a complex bordism between
\end{thm}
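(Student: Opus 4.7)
My plan is to work locally on $W(Q,\lambda)$, using the unimodularity hypothesis to produce smooth charts near each face, then to assemble a stable complex structure from those charts and check its restriction to each boundary component.

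First I will classify points of $W(Q,\lambda)$ by the face whose relative interior contains their projection. Over the interior of $Q$ the local model is trivially $\R^{n+1}\times\T^n$. Over the relative interior of a codimension $l$ face $F=F_{i_1}\cap\dots\cap F_{i_l}$ with no $F_{i_j}$ exceptional, the hypothesis lets me extend $\lambda(F_{i_1}),\dots,\lambda(F_{i_l})$ to a $\Z$-basis of $\Z^n$. A $GL_n(\Z)$-change of coordinates on $\T^n$ by this basis identifies the collapsed subtorus $\T_{M(F)}$ with the coordinate subtorus in the first $l$ factors, so a neighborhood in $W(Q,\lambda)$ becomes
\[
([0,\varepsilon)\times S^1/{\sim})^l\times\T^{n-l}\times(\text{open subset of }F)\simeq (D^2)^l\times\T^{n-l}\times\R^{n+1-l},
\]
a smooth open manifold. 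For a face $F=F_{i_1}\cap\dots\cap F_{i_{l-1}}\cap Q_i$ on an exceptional facet only $l-1$ vectors enter $M(F)$, so the analogous basis change gives the local model $(D^2)^{l-1}\times [0,\varepsilon)\times\T^{n-l+1}\times\R^{n+1-l}$, in which the $[0,\varepsilon)$-direction is the normal coordinate to $Q_i$ in $Q$. This simultaneously produces the manifold-with-boundary structure and identifies $\partial W(Q,\lambda)=\bigsqcup_{i=1}^{k}\pi^{-1}(Q_i)$.

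Next I will install a stable complex structure on each chart: the $D^2$-factors carry the canonical structure from $\C$, and the remaining $\T^j\times\R^{n+1-l}$ (with an extra $[0,\varepsilon)$ factor on boundary charts) becomes complex after adjoining enough trivial $\underline{\R}$-summands. To glue these into a global structure I would realise $W(Q,\lambda)$ as the quotient of an open subset of $\C^m$ (where $m=|\mc{F}(Q)|$) by the real-torus kernel of $\T^m\to\T^n$ induced by $\lambda$, following the moment-angle / Davis--Januszkiewicz recipe. The unimodularity assumption makes this quotient action locally free on the relevant open subset, and the linear $\C^m$-action descends the tangential stable complex structure to $W(Q,\lambda)$.

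Finally, for the boundary I will use that over $Q_i$ the equivalence relation depends only on the vectors $\lambda(F_j)$ with $Q_i\cap F_j\neq\emptyset$, which by \eqref{eq:charf} is exactly $\xi_i$; linear independence of $\xi_i(Q_i\cap F_{j_1}),\dots,\xi_i(Q_i\cap F_{j_r})$ at nonempty intersections is inherited from $\lambda$, so $\xi_i$ is a characteristic function and $\pi^{-1}(Q_i)=M(Q_i,\xi_i)$ is a quasitoric manifold whose canonical Davis--Januszkiewicz charts are exactly the restrictions of the boundary charts above. The main obstacle I expect is not the local analysis but the coherent assembly of chart-wise stable complex structures into a single one on $W(Q,\lambda)$ whose restriction to every $M(Q_i,\xi_i)$ is the standard quasitoric structure; this is precisely the technical work done in \cite[Corollary 4.5, Theorem 5.5]{sou-15}.
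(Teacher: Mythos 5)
This statement is not proved in the paper at all: it is imported verbatim from Sarkar's work, with the proof delegated to \cite[Corollary 4.5, Theorem 5.5]{sou-15}. Measured against that, your proposal is consistent with the paper's treatment, and its local analysis is sound: using the ``part of a basis'' hypothesis to normalize the collapsed subtorus $\T_{M(F)}$ to a coordinate subtorus gives exactly the charts $(D^2)^l\times\T^{n-l}\times\R^{n+1-l}$ in the interior and $(D^2)^{l-1}\times[0,\varepsilon)\times\T^{n-l+1}\times\R^{n+1-l}$ over faces of an exceptional facet (note it is the condition $\vertex Q=\bigcup\vertex Q_i$ that saves you from ever needing $n+1$ independent vectors in $\Z^n$), and the identification $\partial W(Q,\lambda)=\bigsqcup\pi^{-1}(Q_i)=\bigsqcup M(Q_i,\xi_i)$ with $\xi_i$ a characteristic function is correct and is essentially how the boundary is recognized.

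As a standalone proof, however, there is a genuine gap, and you name it yourself: the coherent global stably complex structure, and the fact that its restriction to each $M(Q_i,\xi_i)$ is the \emph{canonical} quasitoric structure, is precisely the content of the cited theorem, so deferring it to \cite{sou-15} proves nothing beyond the citation the paper already makes. Moreover, the specific assembly route you sketch is not quite right as stated: realizing $W(Q,\lambda)$ as an open subset of $\C^{m}$ (with $m=|\mc{F}(Q)|$) modulo $\ker(\T^m\to\T^n)$ would complexify the directions normal to the exceptional facets $Q_i$ as well, collapsing circles over them and destroying the boundary; one must keep those $k$ directions real (a model inside $\C^{m-k}\times\R^{k}$, with $\lambda$ defined only on the non-exceptional facets) and the unimodularity hypothesis is needed to make the kernel act \emph{freely}, not merely locally freely, or you only get an orbifold. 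Finally, the matching of the boundary structure with the canonical one involves the omniorientation data (the signs/orientations of the line bundles over the characteristic submanifolds of $M(Q_i,\xi_i)$), which your plan does not address; this bookkeeping is exactly the technical work in Sarkar's Theorem 5.5, and it is what Theorem \ref{thm:orbb} of the present paper relies on.
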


One can avoid the direct check of the isotropy property of the function $\lambda$ due to the following simple

\begin{lm}\label{lm:check}
Let $\lb Q\setminus Q_1,\dots,Q_k\rb$ be a simple $(n+1)$-dimesional polytope with a function $\lambda:\mc{F}(Q)\setminus\lb Q_1,\dots,Q_k\rb\to\Z^n$. Let $\xi_i:\mc{F}(Q_i)\to\Z^n$ be the respective restrictions of $\lambda$ to the facets of $Q_i$ given by \eqref{eq:charf}, $i=1,\dots,k$. Suppose that $\xi_i$ is a di-characteristic (characteristic, resp.) function for any $i=1,\dots,k$. Then $\lambda$ is an isotropy function (satisfying the condition of  Theorem \ref{thm:sarkar}, resp.).
\end{lm}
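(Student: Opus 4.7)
My plan is to deduce the isotropy condition for $\lambda$ directly from the hypothesis on each $\xi_i$ by reducing any nonempty intersection of non-exceptional facets of $Q$ to a nonempty intersection of facets inside some exceptional facet $Q_{i_0}$. The exceptional facets serve as witnesses through the assumption $\vertex Q=\bigcup_i\vertex Q_i$, which ensures that every face of $Q$ can be pushed onto some $Q_i$.

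First I would pick an arbitrary nonempty intersection $F=F_{i_1}\cap\cdots\cap F_{i_r}$ of pairwise distinct facets $F_{i_j}\in\mc{F}(Q)\setminus\lb Q_1,\dots,Q_k\rb$. Since $Q$ is simple of dimension $n+1$, $F$ is a face of dimension $n+1-r$ and in particular contains at least one vertex $v$. The assumption $\vertex Q=\bigcup_i \vertex Q_i$ then provides an index $i_0$ with $v\in Q_{i_0}$; by the disjointness of the exceptional facets and the choice of the $F_{i_j}$, the facet $Q_{i_0}$ is distinct from each $F_{i_j}$. Simplicity of $Q$ at $v$ then guarantees that the $r+1$ distinct facets $F_{i_1},\dots,F_{i_r},Q_{i_0}$ through $v$ meet in a face $F\cap Q_{i_0}$ of codimension exactly $r+1$, and that the traces $F_{i_j}\cap Q_{i_0}$ are $r$ pairwise distinct facets of $Q_{i_0}$ whose common intersection is this face.

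Then I would transfer to the $n$-dimensional simple polytope $Q_{i_0}$, on which $\xi_{i_0}$ lives. By the definition \eqref{eq:charf}, $\xi_{i_0}(F_{i_j}\cap Q_{i_0})=\lambda(F_{i_j})$ for $j=1,\dots,r$. If $\xi_{i_0}$ is di-characteristic, the corresponding vectors are linearly independent in $\Z^n$; if $\xi_{i_0}$ is characteristic, they additionally form part of a $\Z$-basis of $\Z^n$. In either case this is exactly the isotropy condition (respectively, the strengthened condition required by Theorem~\ref{thm:sarkar}) for $F_{i_1},\dots,F_{i_r}$, and since the intersection was arbitrary we are done.

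The only step that will need some care is the combinatorial assertion that $F_{i_1}\cap Q_{i_0},\dots,F_{i_r}\cap Q_{i_0}$ are $r$ distinct facets of $Q_{i_0}$ meeting precisely in $F\cap Q_{i_0}$. This reduces to the standard fact that at a simple vertex $v$ of an $(n+1)$-polytope the $n+1$ facets through $v$ are transverse, so any $r+1$ of them cut out a face of dimension $n-r$, and the traces of any $r$ of them on the remaining one are $r$ distinct codimension-one faces meeting only in that common face. Once this local-at-$v$ picture is in hand, no further verification is needed.
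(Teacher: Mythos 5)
Your argument is correct: the paper itself offers no proof of Lemma~\ref{lm:check} (it is merely called ``simple''), and your verification --- pick a vertex $v$ of the nonempty intersection $F_{i_1}\cap\dots\cap F_{i_r}$, use $\vertex Q=\bigcup\vertex Q_i$ to place $v$ on some exceptional facet $Q_{i_0}$, observe via simplicity that the traces $F_{i_j}\cap Q_{i_0}$ are distinct facets of $Q_{i_0}$ with the common point $v$, and then invoke \eqref{eq:charf} together with the (di-)characteristic hypothesis on $\xi_{i_0}$ --- is exactly the intended reduction, with all the relevant combinatorial details (distinctness of the traces, nonemptiness of their intersection) properly justified. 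The only cosmetic remark is that distinctness of $Q_{i_0}$ from the $F_{i_j}$ already follows from the $F_{i_j}$ lying in the domain of $\lambda$, so the disjointness of the exceptional facets is not needed at that step.
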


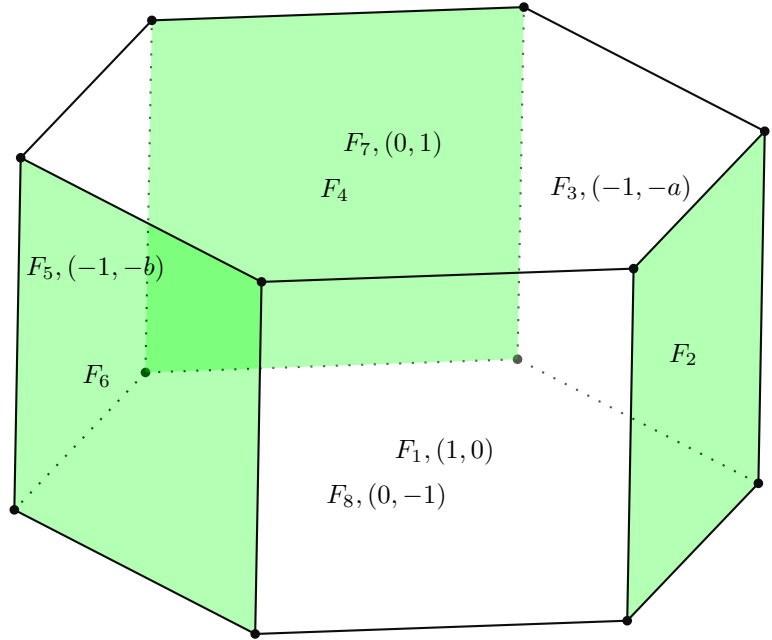
\begin{figure}
\centering
\caption{Polytope $Q$ with distinguished facets (green) and isotropy function for $n=2$.\label{fig:pic}}
\begin{tikzpicture}%
	[x={(-0.145821cm, 0.347936cm)},
	y={(0.989167cm, 0.035285cm)},
	z={(-0.016900cm, -0.936854cm)},
	scale=10.000000,
	back/.style={loosely dotted, thick},
	edge/.style={color=white!5!black, thick},
	facet/.style={fill=white, fill opacity=0.300000},
	facetg/.style={fill=green, fill opacity=0.300000},
	facetb/.style={fill=blue, fill opacity=0.300000},
	facetlast/.style={fill=white!50!black,fill opacity=0.300000},
	vertex/.style={inner sep=1pt,circle,draw=white!5!black,fill=white!5!black,thick,anchor=base},
    vertexr/.style={inner sep=1pt,circle,draw=red!5!black,fill=white!5!black,thick,anchor=base}]	

%	back/.style={loosely dotted, thin},
%	edge/.style={color=blue!95!black, thick},
%	facet/.style={fill=blue!95!black,fill opacity=0.800000},
%	vertex/.style={inner sep=1pt,circle,draw=green!25!black,fill=green!75!black,thick,anchor=base}]
%
%
%% Coordinate of the vertices:
%%
\coordinate (-0.50000, 0.25000, -0.25000) at (-0.50000, 0.25000, -0.25000);
\coordinate (-0.50000, 0.25000, 0.25000) at (-0.50000, 0.25000, 0.25000);
\coordinate (0.00000, 0.50000, -0.25000) at (0.00000, 0.50000, -0.25000);
\coordinate (0.00000, 0.50000, 0.25000) at (0.00000, 0.50000, 0.25000);
\coordinate (0.50000, -0.25000, 0.25000) at (0.50000, -0.25000, 0.25000);
\coordinate (0.50000, -0.25000, -0.25000) at (0.50000, -0.25000, -0.25000);
\coordinate (0.50000, 0.25000, 0.25000) at (0.50000, 0.25000, 0.25000);
\coordinate (0.50000, 0.25000, -0.25000) at (0.50000, 0.25000, -0.25000);
\coordinate (0.00000, -0.50000, -0.25000) at (0.00000, -0.50000, -0.25000);
\coordinate (0.00000, -0.50000, 0.25000) at (0.00000, -0.50000, 0.25000);
\coordinate (-0.50000, -0.25000, -0.25000) at (-0.50000, -0.25000, -0.25000);
\coordinate (-0.50000, -0.25000, 0.25000) at (-0.50000, -0.25000, 0.25000);
%%
%%
%% Drawing edges in the back
%%
\draw[edge,back] (0.00000, 0.50000, 0.25000) -- (0.50000, 0.25000, 0.25000);
\draw[edge,back] (0.50000, -0.25000, 0.25000) -- (0.50000, -0.25000, -0.25000);
\draw[edge,back] (0.50000, -0.25000, 0.25000) -- (0.50000, 0.25000, 0.25000);
\draw[edge,back] (0.50000, -0.25000, 0.25000) -- (0.00000, -0.50000, 0.25000);
\draw[edge,back] (0.50000, 0.25000, 0.25000) -- (0.50000, 0.25000, -0.25000);
%%
%%
%% Drawing vertices in the back
%%
\node[vertex] at (0.50000, -0.25000, 0.25000)     {};
\node[vertex] at (0.50000, 0.25000, 0.25000)     {};
%%
%%
%% Drawing the facets
%%
\fill[facet] (-0.50000, -0.25000, 0.25000) -- (-0.50000, 0.25000, 0.25000) -- (-0.50000, 0.25000, -0.25000) -- (-0.50000, -0.25000, -0.25000) -- cycle {};
\fill[facetg] (-0.50000, -0.25000, 0.25000) -- (0.00000, -0.50000, 0.25000) -- (0.00000, -0.50000, -0.25000) -- (-0.50000, -0.25000, -0.25000) -- cycle {};
\fill[facetg] (0.00000, 0.50000, 0.25000) -- (-0.50000, 0.25000, 0.25000) -- (-0.50000, 0.25000, -0.25000) -- (0.00000, 0.50000, -0.25000) -- cycle {};
\fill[facet] (-0.50000, -0.25000, -0.25000) -- (-0.50000, 0.25000, -0.25000) -- (0.00000, 0.50000, -0.25000) -- (0.50000, 0.25000, -0.25000) -- (0.50000, -0.25000, -0.25000) -- (0.00000, -0.50000, -0.25000) -- cycle {};
\fill[facetg] (0.50000, -0.25000, 0.25000) -- (0.50000, 0.25000, 0.25000) -- (0.50000, 0.25000, -0.25000) -- (0.50000, -0.25000, -0.25000) -- cycle {};
%%
%%
%% Drawing edges in the front
%%
\draw[edge] (-0.50000, 0.25000, -0.25000) -- (-0.50000, 0.25000, 0.25000);
\draw[edge] (-0.50000, 0.25000, -0.25000) -- (0.00000, 0.50000, -0.25000);
\draw[edge] (-0.50000, 0.25000, -0.25000) -- (-0.50000, -0.25000, -0.25000);
\draw[edge] (-0.50000, 0.25000, 0.25000) -- (0.00000, 0.50000, 0.25000);
\draw[edge] (-0.50000, 0.25000, 0.25000) -- (-0.50000, -0.25000, 0.25000);
\draw[edge] (0.00000, 0.50000, -0.25000) -- (0.00000, 0.50000, 0.25000);
\draw[edge] (0.00000, 0.50000, -0.25000) -- (0.50000, 0.25000, -0.25000);
\draw[edge] (0.50000, -0.25000, -0.25000) -- (0.50000, 0.25000, -0.25000);
\draw[edge] (0.50000, -0.25000, -0.25000) -- (0.00000, -0.50000, -0.25000);
\draw[edge] (0.00000, -0.50000, -0.25000) -- (0.00000, -0.50000, 0.25000);
\draw[edge] (0.00000, -0.50000, -0.25000) -- (-0.50000, -0.25000, -0.25000);
\draw[edge] (0.00000, -0.50000, 0.25000) -- (-0.50000, -0.25000, 0.25000);
\draw[edge] (-0.50000, -0.25000, -0.25000) -- (-0.50000, -0.25000, 0.25000);
%%
%%
%% Drawing the vertices in the front
%%
\node[vertex] at (-0.50000, 0.25000, -0.25000)     {};
\node[vertex] at (-0.50000, 0.25000, 0.25000)     {};
\node[vertex] at (0.00000, 0.50000, -0.25000)     {};
\node[vertex] at (0.00000, 0.50000, 0.25000)     {};
\node[vertex] at (0.50000, -0.25000, -0.25000)     {};
\node[vertex] at (0.50000, 0.25000, -0.25000)     {};
\node[vertex] at (0.00000, -0.50000, -0.25000)     {};
\node[vertex] at (0.00000, -0.50000, 0.25000)     {};
\node[vertex] at (-0.50000, -0.25000, -0.25000)     {};
\node[vertex] at (-0.50000, -0.25000, 0.25000)     {};

\draw (-0.5,0,0) node {$F_1,(1,0)$};
\draw (-0.3,0.35,-0.05) node {$F_2$};
\draw (0.6,0.4,0.05) node {$F_3,(-1,-a)$};
\draw (0.5,0,0) node {$F_4$};
\draw (0.5,-0.32,0.1) node {$F_5,(-1,-b)$};
\draw (-0.7,-0.5,-0.2) node {$F_6$};
\draw (0,0,0.25) node {$F_8,(0,-1)$};
\draw (0,0,-0.25) node {$F_7,(0,1)$};
%\draw (0,0) node {$0$};

%%
\end{tikzpicture} 
\end{figure}

Now let $Q=H^2\times\Delta^{n-1}\subset\R^{n+1}$, where $H^2\subset\R^2$ is the hexagon and\\
$\Delta^{n-1}\subset\R^{n-1}$ is the $(n-1)$-dimensional simplex. Denote the edges of $H^2$ by $H_1,\dots,H_6$ in the counter-clockwise order, and denote the facets of $\Delta^{n-1}$ by\\
$\Delta_{1},\dots,\Delta_{n}$. Then the facets $F_1,\dots,F_{n+6}$ of $Q$ are
\[
H_1\times\Delta^{n-1},\dots,H_6\times\Delta^{n-1},H^2\times\Delta_{1},\dots,H^2\times\Delta_{n},
\]
respectively. Choose the distinguished facets $F_2,F_4,F_6$ of $Q$. Clearly,\\
$\vertex Q=\bigcup_{i=1}^{3}\vertex F_{n}$. Consider the function $\lambda:\mc{F}(Q)\setminus\lb F_1,F_3,F_5\rb\to\Z^{n}$ taking values equal the columns of the following $(n\times(n+3))$-matrix
\[
\begin{pmatrix}
1   & -1 & -1 &  0 &  0 &\dots&  0 &  0 \\
0   &  -a &  -b &  1 &  0 &\dots&  0 & -1 \\
0   &  0 &  0 &  0 &  1 &\dots&  0 & -1 \\
\vdots&\vdots&\vdots&\vdots&\vdots&\ddots&\vdots&\vdots\\
0   &  0 &  0 &  0 &  0 &\dots&  1 & -1 \\
\end{pmatrix},
\]
in the increasing order of facet indices.

\begin{pr}
$\lambda$ is an isotropy function on $\lb Q\setminus F_2,F_4,F_6 \rb$ satisfying the condition of Theorem \ref{thm:sarkar}.
\end{pr}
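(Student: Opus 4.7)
The plan is to apply Lemma~\ref{lm:check}: since the ``part of a basis'' condition of Theorem~\ref{thm:sarkar} reduces (via that lemma) to checking that each restriction $\xi_i$ from \eqref{eq:charf} is a characteristic function on the distinguished facet $F_i$ for $i\in\{2,4,6\}$, it suffices to verify that property at every vertex of $F_2$, $F_4$ and $F_6$.

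Each distinguished facet $F_i=H_i\times\Delta^{n-1}$ is combinatorially $\Delta^1\times\Delta^{n-1}$, an $n$-dimensional simple polytope with $n+2$ facets of two types: two ``hexagon-type'' facets $F_i\cap F_{i-1}$ and $F_i\cap F_{i+1}$ (the neighbouring non-distinguished facets among $F_1,F_3,F_5$), and $n$ ``simplex-type'' facets $F_i\cap F_{6+j}$, $j=1,\dots,n$. A vertex of $F_i$ is determined by choosing one hexagon-type facet and omitting one simplex-type facet, giving $2n$ vertices to check.

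Reading off the matrix defining $\lambda$, every hexagon-type isotropy vector has the form $\pm e_1+c\,e_2$ with $c\in\{0,-a,-b\}$, while the simplex-type vectors are $e_2,e_3,\dots,e_n$ and $-e_2-e_3-\cdots-e_n$. Since the hexagon vector has first coordinate $\pm 1$ and every simplex vector has first coordinate $0$, expanding the $n\times n$ characteristic determinant along the first row reduces it, up to sign, to the $(n-1)\times(n-1)$ minor formed by the last $n-1$ coordinates of the $n-1$ included simplex vectors. If the omitted simplex vector is $-\sum_{k=2}^{n}e_k$, that minor is the identity; otherwise $e_{j+1}$ is omitted for some $1\le j\le n-1$, and adding each remaining column $e_{i+1}$ with $i\ne j$ to the column $-\sum_{k=2}^{n}e_k$ turns it into $-e_{j+1}$, so the minor becomes a signed permutation matrix. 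In either case the determinant is $\pm 1$.

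Thus each $\xi_i$ is characteristic, and Lemma~\ref{lm:check} yields both the isotropy property and the part-of-a-basis condition of Theorem~\ref{thm:sarkar}. The only conceivable obstacle---a potential $a,b$-dependence---never materialises, because the coefficients $c\in\{0,-a,-b\}$ appear only in the $e_2$-slot of the first column, which drops out when one expands the determinant along the first row; equivalently, $c\,e_2$ already lies in the $\Z$-span of the simplex columns, so it cannot affect the basis property.
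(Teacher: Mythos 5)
Your proof is correct, and it follows the paper's reduction but diverges in the verification step. Like the paper, you invoke Lemma~\ref{lm:check} to reduce the isotropy property and the part-of-a-basis condition of Theorem~\ref{thm:sarkar} to showing that each restriction $\xi_i$, $i\in\{2,4,6\}$, is a characteristic function on the $n$-dimensional facet $F_i\cong\Delta^1\times\Delta^{n-1}$; but where the paper then simply recognizes these restrictions as the characteristic matrices of $P^{n}(a)$, $\overline{P^{n}(b)}$ and $(P^{n}(b-a),c_{\mc{T}}(b-a))$ listed in Proposition~\ref{pr:quas} (so the characteristic property is automatic), you verify it by hand: at each of the $2n$ vertices of $F_i$ the relevant $n\times n$ determinant expands along the first row (the hexagon-type vector $\pm e_1+ce_2$ being the only column with nonzero first coordinate) to a minor of simplex-type columns, which your column operations show to be $\pm1$, independently of $a,b$. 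This direct computation is a legitimate, self-contained alternative that avoids citing Proposition~\ref{pr:quas}; its drawback is only that it does not identify \emph{which} quasitoric manifolds arise as the boundary pieces, an identification the paper gets for free from Proposition~\ref{pr:quas} and then uses in Theorem~\ref{thm:orbb}. One small caveat: your closing remark that $ce_2$ ``lies in the $\Z$-span of the simplex columns'' should be read as referring to the $n-1$ simplex columns present at the given vertex (which is still true, since $-e_2$ is an integral combination of $e_3,\dots,e_n$ and $-\sum_{k=2}^{n}e_k$), but the determinant expansion already settles the point, so nothing is missing.
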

\begin{proof}
Follows immediately from Propostion \ref{pr:quas} and Lemma \ref{lm:check}.
\end{proof}

Now the construction of Theorem \ref{thm:sarkar} gives the following

\begin{thm}\label{thm:orbb}
$W^{2n+1}=W(\lb Q\setminus F_2,F_4,F_6\rb,\lambda)$ is a stably complex\\
$(2n+1)$-dimensional manifold with stably complex manifolds $P^{n}(a)$, $\overline{P^{n}(b)}$,\\
$(P^{n}(b-a),c_{\mc{T}}(b-a))$ being the the boundary components of $W$. %The restriction of the stably complex structure on $W^{2n+1}$ coincides with the canonical stably complex structures on the quasitoric manifolds $\P(\overline{\eta}_{1}^{a}\oplus\underline{\C}^k), -\P(\overline{\eta}_{1}^{b}\oplus\underline{\C}^k), (\P(\overline{\eta}_{1}^{b-a}\oplus\underline{\C}^k),\iota_{b-a,k})$.
\end{thm}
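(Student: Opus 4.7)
The plan is to apply Theorem \ref{thm:sarkar} and then identify each boundary quasitoric manifold via Proposition \ref{pr:quas}. Since the preceding proposition already confirms that $\lambda$ is an isotropy function satisfying the hypothesis of Theorem \ref{thm:sarkar}, a direct application of that theorem yields that $W$ is a smooth stably complex $(2n+1)$-manifold with boundary $\bigsqcup_{i\in\{2,4,6\}} M(F_i,\xi_i)$, where each component carries the canonical quasitoric stably complex structure inherited from $W$. The bulk of the argument is then to identify each of these three quasitoric boundary pieces.

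Each distinguished facet $F_i=H_i\times \Delta^{n-1}$ is combinatorially $\Delta^1\times\Delta^{n-1}$, with two ``end'' facets $F_i\cap F_{i-1}$ and $F_i\cap F_{i+1}$ coming from the endpoints of $H_i$, together with $n$ ``side'' facets $F_i\cap F_{6+j}$ for $j=1,\ldots,n$. I would apply formula \eqref{eq:charf} to read off the columns of the characteristic matrix of each $M(F_i,\xi_i)$ from the given $\lambda$-matrix, and then compare with the two matrices in Proposition \ref{pr:quas}. Concretely: for $i=2$, the resulting matrix (first row starting with $(1,-1,0,\ldots)$) matches the matrix of $P^n(a)$ from Proposition \ref{pr:quas} with the free parameter taken to be $0$; for $i=4$, the matrix (first row starting with $(-1,-1,0,\ldots)$) matches the matrix of $(P^n(b-a),c_{\mc{T}}(b-a))$ with the free parameter taken to be $-a$; for $i=6$, the matrix (first row starting with $(-1,1,0,\ldots)$) requires a further $GL_n(\Z)$-adjustment before it matches the matrix of $P^n(b)$.

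The main obstacle will be the orientation bookkeeping at $F_6$. The required $GL_n(\Z)$-adjustment consists of negating the first row and then adding $b$ times the new first row to the second, a transformation of determinant $-1$. Since this corresponds to an orientation-reversing automorphism of $T^n$, the induced $T^n$-equivariant diffeomorphism $M(F_6,\xi_6)\to P^n(b)$ reverses orientation and conjugates the canonical stably complex structure; hence $M(F_6,\xi_6)=\overline{P^n(b)}$ and not $P^n(b)$. As a sanity check, combining the three identifications yields $[P^n(a)]+[\overline{P^n(b)}]+[(P^n(b-a),c_{\mc{T}}(b-a))]=0$ in $\Omega^U_{2n}$, which after using $[\overline{M}]=-[M]$ recovers the relation of Proposition \ref{pr:hijnum}$(iii)$ as an external consistency check. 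Combining everything, the boundary of $W$ is $P^n(a)\sqcup\overline{P^n(b)}\sqcup(P^n(b-a),c_{\mc{T}}(b-a))$, as claimed.
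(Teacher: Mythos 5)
Your proposal is correct and follows essentially the same route as the paper, which simply invokes Theorem \ref{thm:sarkar} together with the preceding proposition (the isotropy of $\lambda$, verified through Proposition \ref{pr:quas} and Lemma \ref{lm:check}) and leaves the identification of the boundary components implicit. Your explicit reading of the characteristic matrices of $M(F_i,\xi_i)$ via \eqref{eq:charf}, the matching with Proposition \ref{pr:quas} (free parameters $0$, $-a$, $0$), and the determinant $-1$ orientation bookkeeping at $F_6$ producing $\overline{P^{n}(b)}$ supply exactly the details the paper omits, and your sanity check against Proposition \ref{pr:hijnum}$(iii)$ confirms the signs.
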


\begin{bibdiv}
\begin{biblist}[\resetbiblist{99}]
\bibselect{biblio_eng}
\end{biblist}
\end{bibdiv}

%\begin{rem}
%A similar to Proposition \ref{pr:hijnum} computation shows that
%\[
%c_{k+1}([\P(\overline{\eta}^{b-a}\oplus\underline{\C}^k),\iota_{b-a,k}])=b-a+1.
%\]
%Hence, $[\P(\overline{\eta}^{b-a}\oplus\underline{\C}^k),\iota_{b-a,k}]$ bounds in $\Omega^*_{U}$ iff $a-b=1$.
%\end{rem}

\end{document}